\newcommand{\N}{\mathbb{N}}
\newcommand{\Z}{\mathbb{Z}}
\newcommand{\lex}{\text{lex}}
\begin{document}
\title{Generalized Lyndon Factorizations of Infinite Words}
%
%
\author{Amanda Burcroff\orcidID{0000-0002-0032-4190} \and
Eric Winsor\orcidID{0000-0003-1922-4648} }
\authorrunning{A. Burcroff and E. Winsor}
%

\institute{University of Michigan, Ann Arbor MI 48109, USA \\
\email{\{burcroff,rcwnsr\}@umich.edu}}
\maketitle              
\begin{abstract}
A generalized lexicographic order on words is a lexicographic order where the total order of the alphabet depends on the position of the comparison.  A generalized Lyndon word is a finite word which is strictly smallest among its class of rotations with respect to a generalized lexicographic order.  This notion can be extended to infinite words: an infinite generalized Lyndon word is an infinite word which is strictly smallest among its class of suffixes.  We prove a conjecture of Dolce, Restivo, and Reutenauer: every infinite word has a unique nonincreasing factorization into finite and infinite generalized Lyndon words.  When this factorization has finitely many terms, we characterize the last term of the factorization.  Our methods also show that the infinite generalized Lyndon words are precisely the words with infinitely many generalized Lyndon prefixes.

\keywords{generalized lexicographic order \and infinite generalized Lyndon word  \and unique nonincreasing Lyndon factorization}
\end{abstract}
\section{Introduction}\label{1}
A {\it rotation} of a finite word $w$ is a word of the form $vu$, where $w = uv$ is a factorization of $w$.  A finite word is called {\it Lyndon} if it is strictly smallest among its class of rotations with respect to the standard lexicographic order. In particular, every finite word is a conjugate of some power of a Lyndon word. Lyndon words were introduced in 1953 by Shirshov in \cite{Shi} and studied by Lyndon in \cite{Lyn}.  Lyndon words have been given various names throughout their history, including {\it standard lexicographic sequences}, {\it regular words}, and {\it prime words}.  These names hint at their significant role in the factorization of words.  

Let $A^*$ denote the free monoid on a totally ordered (possibly infinite) alphabet $A$, where $A^*$ is ordered lexicographically.  The Chen-Fox-Lyndon factorization theorem for words states that the Lyndon words form a basis for $A^*$ \cite{CFL}.  Put more concretely, any finite word on $A$ can be written uniquely as a product of nonincreasing Lyndon words.  

About 40 years later, infinite Lyndon words were introduced in \cite{SMDS}.  There are several equivalent definitions, but we use the definition which focuses on the idea of rotation.  An infinite word is called {\it Lyndon} if it is strictly smallest among its suffixes with respect to the standard lexicographic order.  If $w$ is an infinite word with a nontrivial factorization $uv$, the suffix $v$ can be viewed as the rotation with respect to this factorization.  Let $A^\omega$ denote the set of sequences, or infinite words, over $A$.  These too yielded deep factorization properties; Siromoney et al. showed that every sequence in $A^\omega$ has a unique factorization as a nonincreasing product of finite and infinite Lyndon words.

The extension of the Lyndon property to generalized lexicographic orders came about 10 years later by Reutenauer \cite{Reu}.  A {\it generalized lexicographic order} is a modified lexicographic order where the total order of the alphabet depends on the index of comparison. This naturally induces a notion of finite and infinite {\it generalized Lyndon words} under a generalized lexicographic order.  (See Section \ref{2}.) Reutenauer showed that the finite generalized Lyndon words form a basis for $A^*$ using Hall set theory, and Dolce et al. provided a combinatorial proof in 2018 \cite{Reu,DRR}.  Generalized Lyndon words are studied further by Dolce et al. in \cite{DRR2}.

An example of a generalized lexicographic order is the {\it alternating order} $\leq_{\text{alt}}$, where the alphabet is given its standard order when the index of comparison is odd and its opposite order when the index is even.  This order can be connected with continued fractions by noting that the map $\phi: \N^\omega \to \mathbb{R}$ defined by
\vspace{-2mm}$$\displaystyle\phi(x_1x_2\cdots) = x_1 +  \cfrac{1}{x_2 + \cfrac{1}{\;\;\;\;\ddots}}\vspace{-1mm}$$
satisfies $u \leq_{\text{alt}} v$ in $\N^\omega$ if and only if $\phi(u) \leq \phi(v)$ in $\mathbb{R}$.  Generalized Lyndon words with respect to the alternating order are called {\it Galois words}, and Galois factorizations were given further characterization in \cite{DRR2}.  Another special case are the anti-Lyndon words, introduced in \cite{GM}, which are generalized Lyndon words with respect to the opposite lexicographic order.

Dolce et al. conjectured that the finite and infinite generalized Lyndon words provide a unique nonincreasing (with respect to $\omega$-powers) factorization of all infinite words.  Our main result is to show that this is indeed the case.  

In Section \ref{3}, we focus on words with a generalized Lyndon suffix.  Theorem \ref{L} shows that these are precisely the words with finitely many terms in their nonincreasing generalized Lyndon factorization.  Moreover, we characterize the last term as the first generalized Lyndon suffix (with respect to the index).  

Sections \ref{4} and \ref{5} focus on the existence and uniqueness, respectively, of nonincreasing generalized Lyndon factorizations for words which have no generalized Lyndon suffix.  In the process we develop powerful machinery to take advantage of the strong properties of these factorizations.  A product of this machinery is presented briefly in Section \ref{6}, where we show that an infinite word is generalized Lyndon if and only if it has infinitely many generalized Lyndon prefixes. This is the generalized analogue of the result of Siromoney et al. showing that infinite Lyndon words are precisely the words with infinitely many Lyndon prefixes.  

\section{Preliminaries}\label{2}
Let $\N = \{1,2,3,\dots\}$.  Words are finite or infinite (to the right) sequences of letters from a fixed (possibly infinite) alphabet $A$.  For $i < j$, the contiguous substring beginning at the $i^{\mathrm{th}}$ letter and ending with the $j^{\mathrm{th}}$ (inclusive on both ends) is denoted $x[i,j]$.  A word $v$ is a {\it factor} of $x$ if $x = uvw$ for (possibly empty) words $u$ and $w$.  In the case that $u$ is empty, $v$ is a {\it prefix} of $x$, and if $w$ is empty, then $v$ is a {\it suffix} of $x$.  If in addition $w$ (resp. $u$) is nonempty, we say that the prefix (resp. suffix) is {\it proper}.  If $x$ is an infinite word, the suffix of $x$ beginning at the $j^{\mathrm{th}}$ index of $x$ is denoted $x[j,\infty)$.  The length of a finite word $w$ is denoted by $|w|$.

Let $A^\infty = A^* \cup A^\omega$.  Given a total order on an alphabet $A$, the {\it lexicographic ordering} $<_{\lex}$ on $A^\infty$ is defined such that $x <_{\lex} y$ if and only if $x$ is a proper prefix of $y$ or $x = pas$ and $y = pbs'$ for words $p,s,s'$ and letters $a < b$.  We are primarily interested in a generalization of this order.

For each $n \in \N$, let $<_n$ be a total order on $A$.  The {\it generalized lexicographic order} $<$ induced by $(<_n)_{n \in \N}$ is defined such that $x < y$ if and only if $x$ is a proper prefix of $y$ or $x = pas$ and $y = pbs'$ for words $p,s,s'$ and letters $a <_{|p| + 1} b$.

If $u$ is a prefix of $v$ or $v$ is a prefix of $u$, we write $u \sim v$.  Note that if $|u| = |v|$, then $u \sim v$ implies $u = v$.  We will use the $\sim$ operator ``transitively'', where the expression $w_1 \sim w_2 \sim \cdots \sim w_n$ implies that the shortest of the $n$ words is a prefix of the rest.  We also define a modified comparison operator $\lesssim$ such that $w_1 \lesssim \cdots \lesssim w_n$ if the prefixes $p_i$ of $w_i$ having length $\min\{|w_1|,\dots,|w_n|\}$ satisfy $p_1 \leq \cdots \leq p_n$, where $\leq$ is the generalized lexicographic order.  The same property of only comparing the prefixes up to the length of the shortest word in a chain also applies when the operators $\sim$ and $\lesssim$ are applied together in a chain.

A finite word $v$ is called a {\it power} of a finite word $u$ if $v = u^k$ for some integer $k \geq 2$.  Let the $\omega$-power of $u$, denoted by $u^\omega$, be the infinite word $\prod_{i=1}^\infty u$.  An infinite word $v$ is called a {\it power} of a finite word $u$ if $v = u^\omega$; we also say that $v$ is {\it periodic}.  If $u$ is infinite, we use the convention $u^\omega = u$. An infinite word with a periodic suffix is called {\it eventually periodic}, and an infinite word which is not eventually periodic is called {\it aperiodic}.  A word which is not a power is called {\it primitive}.  A finite word $w$ is called a {\it fractional power} of a finite word $u$ if $w \sim u^\omega$.  We write $w = u^{|w|/|u|}$, e.g., $01 = (0111)^{1/2}$. See \cite{Lot2}, \cite{Lot}, and \cite{PR} for more on the combinatorics of words.

A word $w$ is a {\it finite generalized Lyndon word} if it is strictly smallest among its class of rotations with respect to a generalized lexicographic order.  That is, for any nontrivial factorization $w = uv$, we have $uv < vu$.  An infinite word $w$ is an {\it infinite generalized Lyndon word} if it is strictly smallest among its class of suffixes with respect to a generalized lexicographic order.  A nonincreasing generalized Lyndon factorization of a word $w$ is a product of the form $w = \prod_{i = 1}^n \ell_i$  where $n \in \N \cup \{\infty\}$, each $\ell_i$ is generalized Lyndon, and $\ell_i^\omega \geq \ell_{i+1}^\omega$ for all $i \in [1,n)$.

\section{Existence and Uniqueness of Finite Factorizations}\label{3}

In this section, we show that the words admitting a unique finite nonincreasing generalized Lyndon factorization are precisely the words that have a generalized Lyndon suffix. 

\begin{lemma}\emph{(\cite{DRR}, Lemma 31)} \label{C1}
Let $u,v$ be nonempty finite words.  Then the following four conditions are equivalent:
\\\emph{(1) $u^\omega < v^\omega$ \hspace{.75cm} (2) $(uv)^\omega < v^\omega$ \hspace{.75cm} (3) $u^\omega < (vu)^\omega$ \hspace{.75cm} (4) $(uv)^\omega < (vu)^\omega$.}
\end{lemma}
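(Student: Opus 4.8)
The plan is to show that all four strict inequalities are governed by one and the same comparison: a single pair of letters at a single position. First dispose of the case $u^\omega = v^\omega$. This forces $u$ and $v$ to be powers of a common primitive word, whence $uv = vu$ and $(uv)^\omega = (vu)^\omega = u^\omega = v^\omega$, so that all four inequalities are simultaneously false and the equivalence holds vacuously. Assume therefore that $u^\omega \neq v^\omega$. Since the generalized lexicographic order is a total order on $A^\omega$, there is a well-defined first position $N$ at which $u^\omega$ and $v^\omega$ differ; write $a = u^\omega[N]$ and $b = v^\omega[N]$, so that $a \neq b$ and condition (1) holds if and only if $a <_N b$.

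The crux is the claim that the two ``mixed'' words track the two pure powers up to position $N$: precisely, $(uv)^\omega$ agrees with $u^\omega$ on $[1,N]$, and $(vu)^\omega$ agrees with $v^\omega$ on $[1,N]$. Granting this, all four words agree on $[1,N-1]$, while at position $N$ the left-hand words $u^\omega$ and $(uv)^\omega$ both carry $a$ and the right-hand words $v^\omega$ and $(vu)^\omega$ both carry $b$. Hence each of (1)--(4) compares $a$ against $b$ under $<_N$, so all four are equivalent to $a <_N b$ and therefore to one another.

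To prove the claim I would use the identity $(uv)^\omega = u\,(vu)^\omega$. For $i \le |u|$ both $(uv)^\omega$ and $u^\omega$ equal $u[i]$, and for $i = |u| + r$ one has $(uv)^\omega[i] = (vu)^\omega[r] = v^\omega[r]$ (valid while $r \le |v|$) together with $u^\omega[i] = u^\omega[r]$, so agreement at position $i$ reduces to the already-known agreement $u^\omega[r] = v^\omega[r]$ for $r \le N-1$. The only thing that could break this is the range condition $N - |u| \le |v|$, i.e.\ $N \le |u| + |v|$, and this is exactly where periodicity must enter. The common prefix $u^\omega[1,N-1] = v^\omega[1,N-1]$ has both period $|u|$ and period $|v|$, so if its length reached $|u| + |v| - \gcd(|u|,|v|)$ the Fine--Wilf theorem would give it period $\gcd(|u|,|v|)$, forcing $u^\omega = v^\omega$, a contradiction. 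Hence $N \le |u| + |v| - \gcd(|u|,|v|) < |u| + |v|$, the range condition holds, the agreement propagates all the way to $N$, and the statement for $(vu)^\omega$ and $v^\omega$ follows by exchanging $u$ and $v$. I expect this Fine--Wilf bound on $N$ to be the main obstacle, since without it the $v$-block inside $(uv)^\omega$ could diverge from the corresponding $u$-block of $u^\omega$ before position $N$, and the reduction of all four inequalities to the single letter comparison $a <_N b$ would fail.
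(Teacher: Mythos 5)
The paper does not actually prove this lemma: it is imported verbatim from \cite{DRR} (Lemma 31 of Dolce--Restivo--Reutenauer), so there is no in-paper argument to compare yours against; what follows is an assessment of your proof on its own terms. Your proof is correct, and it is a genuinely self-contained argument where the paper offers only a citation. The degenerate case $u^\omega = v^\omega$ is disposed of properly (Fine--Wilf forces $u$ and $v$ to be powers of a common word, so all four infinite words coincide and all four strict inequalities fail simultaneously). In the main case, reducing all four comparisons to the single letter comparison $a <_N b$ at the first disagreement index $N$ of $u^\omega$ and $v^\omega$ is exactly the right idea, and it is what makes the statement survive the passage from ordinary to generalized lexicographic order: since the alphabet order varies with position, one must know that all four pairs of words are resolved at the \emph{same} index, so that the same total order $<_N$ governs every comparison. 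Your verification that $(uv)^\omega = u(vu)^\omega$ agrees with $u^\omega$ through position $N$ indeed hinges on the range condition $N - |u| \le |v|$, and the Fine--Wilf bound $N \le |u| + |v| - \gcd(|u|,|v|)$ supplies it with room to spare; the inner step only uses $u^\omega[r] = v^\omega[r]$ for $r \le N-1$, which holds by the definition of $N$, and the statement for $(vu)^\omega$ versus $v^\omega$ follows by exchanging the roles of $u$ and $v$, since $N$ is symmetric in the pair. One could alternatively derive the equivalences by chaining order lemmas of the kind the paper proves later (e.g.\ Lemma \ref{H}), but your single-position argument is cleaner in that it proves all four equivalences at once rather than pairwise.
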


We will also make use of a result by Lyndon and Sch{\" u}tzenberger \cite{LS} concerning commuting words, which can easily be strengthened when one of the words is generalized Lyndon.

\begin{lemma}\emph{(\cite{LS})}\label{C2}
Two finite words commute if and only if they are powers of a common word.
\end{lemma}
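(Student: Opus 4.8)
The forward implication is immediate, and I would dispose of it in one line: if $u = w^m$ and $v = w^n$ for a common word $w$ and integers $m,n \geq 0$, then $uv = w^{m+n} = vu$, so $u$ and $v$ commute. The substance is in the converse, and my plan is to prove it by strong induction on $|u| + |v|$ via a Euclidean-style descent.

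Assume $uv = vu$. If either word is empty, say $v = \varepsilon$, then trivially $u = u^1$ and $v = u^0$ are powers of $u$, so I may assume both words are nonempty. The first key observation I would establish is that the shorter of $u, v$ is a prefix of the longer: since $uv = vu$ are equal words of equal length, $v$ is a prefix of $vu = uv$ and $u$ is a prefix of $uv$, so whichever word is shorter is a prefix of both sides and hence of the longer word. Taking $|u| \geq |v|$ without loss of generality, I can then write $u = v u'$ for some strictly shorter word $u'$. Substituting into $uv = vu$ gives $v u' v = v v u'$, and left-cancelling the common prefix $v$ yields $u' v = v u'$. Thus $u'$ and $v$ form a commuting pair with $|u'| + |v| < |u| + |v|$, so the inductive hypothesis supplies a common word $w$ with $u' = w^j$ and $v = w^i$; then $u = v u' = w^{i+j}$, and both $u$ and $v$ are powers of $w$, closing the induction.

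The only step that genuinely requires care is the prefix observation feeding the left-cancellation that produces the strictly smaller commuting pair; once this descent is in place, the argument is a routine induction with no further obstacles. I expect the prefix comparison to be the crux precisely because it is where the equal-length structure of $uv = vu$ is exploited. As an alternative route one could invoke the Fine–Wilf periodicity theorem, noting that $uv = vu$ forces the infinite words $u^\omega$ and $v^\omega$ to coincide, but the self-contained descent above is both shorter and avoids importing additional machinery.
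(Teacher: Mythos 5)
Your proof is correct: it is the standard Euclidean-descent argument for the Lyndon--Sch\"utzenberger commutation theorem, and every step (the prefix observation, the left-cancellation, the well-founded induction on $|u|+|v|$) checks out. Note, however, that the paper does not prove this lemma at all --- it is quoted directly from Lyndon and Sch\"utzenberger's paper \cite{LS} as known background --- so there is no internal proof to compare against; your self-contained argument is the classical one found in, e.g., Lothaire \cite{Lot2}. The only cosmetic mismatch is that the paper reserves the term \emph{power} for exponents $k \geq 2$, whereas your statement uses exponents $\geq 0$; in the lemma's intended reading (both words are powers, with exponent $\geq 1$, of a common word) your proof delivers exactly what is needed for nonempty words.
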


\begin{corollary}\label{I}
Suppose $u$ is a finite generalized Lyndon word, $v$ is any finite word, and $uv = vu$.  Then $v$ is a power of $u$.
\end{corollary}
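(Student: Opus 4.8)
The plan is to derive the conclusion from Lemma~\ref{C2} together with the observation that a finite generalized Lyndon word must be primitive. First I would dispose of the trivial case where $v$ is empty, since then $v = u^0$ is a power of $u$. Assuming $v$ is nonempty, I would apply Lemma~\ref{C2} directly to the hypothesis $uv = vu$: because $u$ and $v$ commute, they are both powers of a common finite word $w$, say $u = w^a$ and $v = w^b$ with $a, b \geq 1$.

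The crucial step is to show that $a = 1$, i.e., that $u$ itself is primitive and hence equals $w$. I would argue by contradiction: if $a \geq 2$, then $u = w \cdot w^{a-1}$ is a nontrivial factorization of $u$, and the rotation associated to it is $w^{a-1} \cdot w = w^a = u$. But the defining property of a finite generalized Lyndon word demands that $u$ be \emph{strictly} smaller (in the generalized lexicographic order) than each of its rotations; applied to this factorization it would force $u < u$, which is absurd. Therefore $a = 1$, so $w = u$, and substituting back yields $v = w^b = u^b$, exhibiting $v$ as a power of $u$.

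I do not anticipate any serious obstacle, as the argument is short once Lemma~\ref{C2} is invoked. The only point requiring care is the primitivity step: one must use that the Lyndon condition gives a \emph{strict} inequality, so that $u$ cannot coincide with one of its own proper rotations, which is exactly what rules out $u$ being a nontrivial power of $w$.
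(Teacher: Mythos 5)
Your proof is correct and follows essentially the same route as the paper: invoke Lemma~\ref{C2} and the primitivity of finite generalized Lyndon words. The only difference is that the paper cites primitivity as a known fact, whereas you prove it directly via the rotation argument (which is a valid and standard justification), so no further changes are needed.
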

\begin{proof}
This follows from Lemma \ref{C2} and the fact that generalized Lyndon words are primitive.
\end{proof}

\begin{lemma}\label{J}
Suppose $u$ and $v$ are finite words satisfying $u^\omega \lesssim v \lesssim u^nv$ (resp. $u^\omega \gtrsim v \gtrsim u^nv$) for some $n \in \N$.  Then $v\sim u^\omega$.  
\end{lemma}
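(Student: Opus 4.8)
The plan is to prove the first statement (the $\lesssim$ case) by contradiction, by showing that $v$ must coincide with the length-$|v|$ prefix of $u^\omega$, which is exactly the assertion $v \sim u^\omega$; the second statement (with $\gtrsim$) will then follow by applying the first to the reversed generalized order $(>_n)_{n \in \N}$. Writing $k = |v|$ and unwinding the definition of $\lesssim$, the hypothesis says precisely that the length-$k$ words $u^\omega[1,k]$, $v$, and $(u^nv)[1,k]$ satisfy $u^\omega[1,k] \le v \le (u^nv)[1,k]$ in the generalized lexicographic order (the shortest member of the chain is $v$, of length $k$). If $v = u^\omega[1,k]$ we are done, so suppose not; since these two words have the same length $k$, there is a first index $i \le k$ at which they differ.

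From the left inequality $u^\omega[1,k] \le v$, together with the fact that the two words agree on $[1,i-1]$ and differ at $i$, the definition of $<$ forces $u^\omega[i] <_i v[i]$. I would then set $w = (u^nv)[1,k]$ and aim to prove that $w$ agrees with $u^\omega$ throughout $[1,i]$, i.e. $w[j] = u^\omega[j]$ for every $j \le i$. Granting this, $w$ and $v$ agree on $[1,i-1]$ and differ at $i$, where $w[i] = u^\omega[i] <_i v[i]$; hence $w < v$, contradicting the right inequality $v \le w$. This contradiction yields $v = u^\omega[1,k]$, as desired.

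The crux is therefore the claim that $w[j] = u^\omega[j]$ for all $j \le i$, and the point to watch — the only real obstacle — is that when $|v| > n|u|$ the prefix $w$ extends beyond the block $u^n$ and incorporates letters of $v$ itself, making the comparison self-referential. I would resolve this using that $n|u|$ is a multiple of $|u|$, so that $u^\omega$ is invariant under the shift by $n|u|$; in other words $u^\omega = u^n u^\omega$. Consequently $u^nv$ and $u^\omega$ share the initial block $u^n$ and thereafter first disagree exactly where the continuations $v$ and $u^\omega$ do — namely at position $n|u| + i$. In particular $u^nv$, and hence its length-$k$ prefix $w$, agrees with $u^\omega$ on $[1, n|u| + i - 1] \supseteq [1,i]$ (here $n|u| \ge 1$ guarantees the inclusion), which is exactly the claim.

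Finally, the symmetric statement follows formally. Replacing each order $<_n$ by its opposite turns every instance of $\lesssim$ into $\gtrsim$ while leaving the conclusion $v \sim u^\omega$ unchanged, so the hypothesis $u^\omega \gtrsim v \gtrsim u^nv$ reduces to the $\lesssim$ case already established, and the proof is complete.
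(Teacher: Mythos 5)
Your proof is correct, and it takes a more elementary route than the paper's, though both are sandwich arguments by contradiction. The paper works at the block level: assuming $v \not\sim u^\omega$, it takes the maximal $m \geq 0$ with $u^m \sim v$ (the existence of such a maximum is exactly the negation of $v \sim u^\omega$), notes $|u^m| < |v|$, and runs the single chain
$$u^{m+1} \leq u^\omega \lesssim v \lesssim u^n v \sim u^{m+n},$$
where the last $\sim$ uses that $u^m$ is a prefix of $v$; since both ends of the chain are prefixes of $u^\omega$, the comparisons collapse to equality up to the overlap length, forcing $v \sim u^{m+n}$ and contradicting the maximality of $m$. You instead work at the letter level: you take the first index $i$ where $v$ deviates from $u^\omega$, extract $u^\omega[i] <_i v[i]$ from the left-hand inequality, and use $u^\omega = u^n u^\omega$ to show that $u^n v$ still matches $u^\omega$ through position $n|u| + i - 1 \geq i$, so the right-hand inequality fails at index $i$. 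The underlying fact is the same in both proofs --- prepending $u^n$ pushes the deviation from $u^\omega$ strictly further out --- but your version is self-contained, using only the definition of the generalized order, whereas the paper's is terser because it leans on the $\sim$/$\lesssim$ chain conventions that it reuses throughout. Your reduction of the $\gtrsim$ case to the $\lesssim$ case by replacing each $<_n$ with its opposite is also a clean formalization of what the paper dismisses as ``analogous'': it is valid precisely because $\lesssim$ and $\gtrsim$ only ever compare equal-length prefixes, on which reversing the alphabet orders exactly swaps the two relations, while the conclusion $v \sim u^\omega$ does not mention the order at all.
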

\begin{proof}
Suppose there exists a maximum nonnegative integer, $m$, such that $u^m \sim v$; note that $|u^m| < |v|$.  Then
$$u^{m+1} \leq u^\omega  \lesssim v \lesssim u^n v \sim u^{m + n}.$$
Thus $v \sim u^{m+n}$, a contradiction to our choice of $m$.  The proof proceeds analogously for the case where the inequalities are reversed.
\end{proof}

\begin{theorem}\label{G}
Suppose $w$ is an infinite word. If $w$ is a nonincreasing product of finite generalized Lyndon words, then $w$ has no generalized Lyndon suffixes.
\end{theorem}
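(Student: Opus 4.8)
The plan is to argue by contradiction, playing the \emph{decreasing} character of a nonincreasing factorization against the \emph{increasing} character of a Lyndon suffix. Since $w$ is infinite while every factor is finite, the factorization has infinitely many terms; write $w=\prod_{i\ge 1}\ell_i$ with each $\ell_i$ finite generalized Lyndon and $\ell_i^\omega\ge\ell_{i+1}^\omega$. Suppose for contradiction that a suffix $s=w[p,\infty)$ is an infinite generalized Lyndon word, and let $\ell_k$ be the factor containing position $p$, so $s=v\,\ell_{k+1}\ell_{k+2}\cdots$ for a nonempty suffix $v$ of $\ell_k$. For every $m>k$ the block-boundary suffix $s_m:=\ell_m\ell_{m+1}\cdots$ is a \emph{proper} suffix of $s$, so the Lyndon hypothesis forces $s<s_m$. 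I would derive a contradiction by exhibiting some $m$ with $s\ge s_m$.

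First I would turn the hypotheses into a strict $\omega$-power inequality. Writing $\ell_k=uv$ with $u$ nonempty, the finite Lyndon property gives $uv<vu$; since $\ell_k$ is primitive (Corollary \ref{I}), the words $uv$ and $vu$ are distinct and of equal length, so this finite inequality is equivalent to $(uv)^\omega<(vu)^\omega$, which by Lemma \ref{C1} upgrades to $\ell_k^\omega<v^\omega$. Combined with $\ell_k^\omega\ge\ell_{k+1}^\omega$ this yields $v^\omega>\ell_{k+1}^\omega$. (When $v=\ell_k$ I would instead pass to the least index $K\ge k$ with $\ell_K\ne\ell_{K+1}$, using that primitive words with equal $\omega$-powers are equal to obtain the strict inequality $\ell_K^\omega>\ell_{K+1}^\omega$; if no such $K$ exists the relevant suffix is literally periodic, and a periodic infinite word has a proper suffix equal to itself, so it cannot be strictly smallest among its suffixes, and we are done.) Now $s$ agrees with $v^\omega$ on its first $|v|$ letters and $s_{k+1}$ agrees with $\ell_{k+1}^\omega$ on its first $|\ell_{k+1}|$ letters, so if the first position where $v^\omega$ and $\ell_{k+1}^\omega$ disagree lies within $\min\{|v|,|\ell_{k+1}|\}$, then $s$ and $s_{k+1}$ first disagree there and in the same direction, giving $s>s_{k+1}$ --- the desired contradiction.

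The main obstacle is the \emph{aligned} case, in which the first disagreement of $v^\omega$ and $\ell_{k+1}^\omega$ occurs only beyond the overlap $\min\{|v|,|\ell_{k+1}|\}$; equivalently, the shorter of $v,\ell_{k+1}$ is a prefix of the longer. This is precisely where the position-dependence of the generalized order blocks a naive comparison, because the factors of $s$ and of $s_{k+1}$ sit at mutually shifted positions. Here I would invoke Lemma \ref{J}: the fractional-power alignment lets me sandwich a finite prefix $\phi$ of the suffix in question as $\ell^\omega\lesssim\phi\lesssim\ell^n\phi$ for an appropriate factor $\ell$ and integer $n$, so that $\phi\sim\ell^\omega$. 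I would then show this alignment cannot break off later without returning us to the generic case above; hence it persists, forcing the suffix to be periodic with period $\ell$. But then a proper suffix of $s$ equals $s$, contradicting that $s$ is strictly smallest among its suffixes.

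Either way we reach a contradiction, so $w$ has no generalized Lyndon suffix. I expect the routine part to be the reduction to the strict $\omega$-power inequality via Lemma \ref{C1} together with the generic disagreement case, and the delicate part to be the aligned case: setting up the hypotheses of Lemma \ref{J} so that its conclusion forces periodicity of an entire suffix of $s$, rather than of a single factor, and checking that every way the alignment could terminate early is already covered by the generic comparison.
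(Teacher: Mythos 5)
Your setup and your ``generic'' case are correct, and they coincide with the opening step of the paper's own proof: from the Lyndon property of $\ell_k$ and Lemma \ref{C1} one gets $v^\omega > \ell_m^\omega$ for every $m>k$, and any disagreement of $v^\omega$ with $\ell_m^\omega$ inside the overlap $\min\{|v|,|\ell_m|\}$ transfers to the comparison of $s$ with $s_m$, contradicting $s<s_m$. Running this for every $m$ (not just $m=k+1$) yields $v\sim\ell_m$ for all $m>k$, which is exactly where the paper stands after its first paragraph. The genuine gap is everything after that: the aligned case is not a residual technicality but constitutes essentially the entire proof, and your proposed route through it --- ``Lemma \ref{J} makes the alignment persist, forcing $s$ to be periodic'' --- does not work as stated. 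Alignment only says each $\ell_m$ is a prefix of $v$ or has $v$ as a prefix; to conclude that $s$ itself is periodic you would need both (i) that all sufficiently late factors equal one fixed word $z$, and (ii) that the head $v$ is itself a power of $z$, so the period runs through all of $s$. Neither follows from Lemma \ref{J}; (ii) is a commutation statement, and that is precisely where the real work lies.

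To see concretely why periodicity of $s$ is the wrong target, consider the subcase in which infinitely many $\ell_m$ are proper prefixes of $v$. Then one fixed prefix $v^\alpha$ recurs infinitely often, and the nonincreasing hypothesis forces all factors from some point on to equal $v^\alpha$, so the \emph{tail} of $s$ is the periodic word $(v^\alpha)^\omega$ --- but this alone yields no contradiction, because $s=v\ell_{k+1}\cdots$ is then only \emph{eventually} periodic, and eventually periodic infinite words can perfectly well be generalized Lyndon (e.g.\ $01^\omega$ in the standard order). A contradiction appears only once one shows $v$ is a power of $v^\alpha$, which requires prefix-versus-factor comparisons carried out at matching positions (the order is position-dependent, so the common prefix must be kept in place) together with Lemma \ref{C2} and Corollary \ref{I}. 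Moreover, in the paper's two main subcases the contradictions ultimately reached are not ``a proper suffix of $s$ equals $s$'': when every $\ell_m$ has $v$ as a prefix, the first factor $\ell_s\neq v$ is shown via Lemma \ref{J} to be a fractional power $v^{n+\beta}$, and the contradiction is that $v$ and $v^\beta$ commute, violating the primitivity/Lyndon property of $\ell_s$; when some factors are shorter than $v$, the contradiction is that $v$ commutes with $\ell_{k+1}$, making $\ell_{k+1}$ a proper suffix of $\ell_k$ with $\ell_{k+1}^\omega>\ell_k^\omega$, violating the nonincreasing hypothesis. None of this machinery is set up in your sketch --- you flag it yourself as the part not done --- so as it stands the proposal establishes only the paper's first step.
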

%
%
\begin{proof}
Suppose $w$ has a generalized Lyndon suffix $\ell$.  Without loss of generality, we can assume $w = \ell_0\ell_1\ell_2\cdots$ and $\ell = u\ell_1\ell_2\cdots$, where each $\ell_i$ is a generalized Lyndon word, $\ell_0^\omega \geq \ell_1^\omega \geq \cdots$, and $u$ is a suffix of $\ell_0$.  Since $\ell_0$ is generalized Lyndon, Lemma \ref{C1} implies $u^\omega \geq \ell_0^\omega$. Furthermore, since $\ell$ is generalized Lyndon, we have $\ell_r \gtrsim u$ for all $r \in \N$.  Thus, for all $r \in \N$ we have $u^\omega \geq \ell_0^\omega \geq \ell_r^\omega \sim \ell_r \gtrsim u$, hence $\ell_r \sim u$.

Suppose that there exists some $r \in \N$ such that $|\ell_r| < |u|$.  Note that each such $\ell_r$ is a prefix of $u$.  By the nonincreasing property of the generalized Lyndon factors, either there exist finitely many such $r$, or there exists some $n \in \N$ and $\alpha \in (0,1)$ such that for all $r \geq n$, we have $\ell_r = u^\alpha$. The latter case holds because there are only finitely many prefixes of $u$, so one prefix must appear infinitely many times. By the nonincreasing property of the factorization, this means that all terms in the factorization after the first term equal to this prefix must also equal this prefix. Observe that in the latter case, we have
$$(u^\alpha)^\omega = \ell_n^\omega \leq u^\omega \sim u \lesssim \ell_n\ell_{n+1}\cdots = (u^\alpha)^\omega,$$
hence $u \sim (u^\alpha)^\omega$.  

We conclude that there exists a minimal $k \in \N$ such that $\ell_k = u^\alpha$ for some $\alpha \in (0,1)$ and $u \sim \ell_{k+1}\ell_{k+2}\cdots$.  Since $u^\alpha u \sim \ell_k\ell_{k+1}\cdots$, then $u^\alpha u \gtrsim u\ell_1$. Thus, $u^\alpha u\gtrsim u\sim u^\omega\geq \ell_k^\omega=(u^\alpha)^\omega$, so Lemma \ref{J} implies $u\sim (u^\alpha)^\omega$. Suppose $|\ell_1|\geq |u|$. Then $uu^\alpha$ is a prefix of $w$, so
$$(u^\alpha)^\omega \sim u^\alpha u \geq u u^\alpha \sim u^\omega \geq (u^\alpha)^\omega,$$
hence $u$ is a power of $u^\alpha$ by Corollary \ref{I}.  Thus, $u$ is not generalized Lyndon, so $u^\omega > \ell_0^\omega \geq \ell_k^\omega = (u^\alpha)^\omega = u^\omega$, a contradiction.  

Thus, we must have that $|\ell_1| < |u|$.  By the minimality of $k$, we have that $|\ell_r| < |u|$ for $1 \leq r \leq k$, which implies that $\ell_r \sim u \sim (u^\alpha)^\omega = \ell_k^\omega$ for $1 \leq r \leq k$. As $u \sim \ell_k^\omega$, we have that $u \sim \ell_k u \sim \ell_k\ell_{k+1}$. Hence
$$\ell_{k-1}^\omega \leq u^\omega \sim u \lesssim \ell_{k-1}\ell_k\ell_{k+1}\cdots \sim \ell_{k-1}u.$$
In particular, by Lemma \ref{J}, we have $u \sim \ell_{k-1}^\omega$ and $u \sim \ell_{k-1}\ell_k\cdots$.  We repeat this process, showing that $u\sim \ell_i^\omega$ and $u \sim \ell_i\ell_{i+1}\cdots$ for all $1 \leq i \leq k$.  Hence $\ell_1 u \sim \ell_1\ell_2\cdots \gtrsim u\ell_1$.  However, since $\ell_1^\omega \leq u^\omega$, Lemma \ref{C1} implies $\ell_1 u \leq u\ell_1$.  Thus $u$ and $\ell_1$ commute, so Corollary \ref{I} implies $u$ is a power of $\ell_1$.  In particular, $\ell_1$ is a proper suffix of $\ell_0$, so $\ell_1^\omega > \ell_0^\omega$, contradicting our nonincreasing assumption.

Thus, we must have $|\ell_r| \geq |u|$ for all $r \in \N$.  $w$ has a generalized Lyndon suffix, so it cannot be periodic. We can fix $s$ to be the smallest index such that $\ell_s \not= u$.  By Lemma \ref{J}, the inequality
$$u^\omega \geq \ell_s^\omega \sim \ell_s \gtrsim u \ell_1 \cdots \ell_s = u^s \ell_s,$$
implies that $\ell_s \sim u^\omega$.  Hence $\ell_s = u^{n + \beta}$ for some $n \in \N$ and $\beta \in [0,1)$.  On the one hand, we have $u^\omega \geq \ell_s^\omega = (u^{n+\beta})^\omega$, hence $u^nuu^\beta \geq u^nu^\beta u$.  On the other hand, since $u^nu u^\beta \sim u^{s + n + \beta}$ is a prefix of $\ell$ and $u^nu^\beta u\sim\ell_s\ell_{s+1}$ is a factor, we have $u^nu u^\beta \leq u^nu^\beta u$ because $\ell$ is generalized Lyndon.  Hence, Lemma \ref{C2} implies $(u^\beta)^\omega = u^\omega = \ell_s^\omega$, contradicting that $(u^\beta)^\omega > \ell_s^\omega$ by the generalized Lyndon property of $\ell_s$.
\end{proof}

\begin{lemma}\label{H}
If $u$ is a finite word and $v$ is an infinite word, then $u^\omega > v$ (resp. $u^\omega < v$) if and only if $uv > v$ (resp. $uv < v$).
\end{lemma}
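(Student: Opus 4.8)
I will prove the statement for the strict inequality $>$ (the case of $<$ is entirely symmetric). The plan is to reduce both comparisons to their first point of disagreement and to observe that, in both, this disagreement occurs at the \emph{same absolute index}, so that one and the same order $<_k$ adjudicates both. First dispose of the degenerate case: if $u^\omega = v$, then $uv = u\,u^\omega = u^\omega = v$, so neither $u^\omega > v$ nor $uv > v$ holds and the biconditional is vacuously true. So assume $u^\omega \ne v$ and set $n = |u|$.

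Since $v \ne u^\omega$, there is a largest integer $m \ge 0$ for which $u^m$ is a prefix of $v$; write $v = u^m r$, where $r$ is infinite and does not have $u$ as a prefix (otherwise $u^{m+1}$ would be a prefix of $v$, contradicting maximality of $m$). Let $j_0 \in [1,n]$ be the least index at which the $j_0$-th letter of $r$ differs from the $j_0$-th letter of $u$; this exists because $u$ is not a prefix of $r$. The crux is then to locate the first disagreement of each comparison. The words $u^\omega$ and $v = u^m r$ agree through position $mn$ (both equal $u^m$ there) and then, across the block of positions $mn+1,\ldots,mn+n$, compare $u$ against $r[1,n]$, first differing at position $mn + j_0$. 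Likewise $uv = u^{m+1}r$ and $v = u^m r$ agree through position $mn$ and then compare the final copy of $u$ against $r[1,n]$ over the very same block, again first differing at position $mn + j_0$. In both comparisons the decisive letters are exactly the $j_0$-th letter of $u$ versus the $j_0$-th letter of $r$, adjudicated by the \emph{same} order $<_{mn + j_0}$; hence the two comparisons carry the same sign, which gives $u^\omega > v$ if and only if $uv > v$.

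The step I expect to require the most care is precisely this alignment. With an ordinary lexicographic order one would be tempted to peel off a copy of $u$ and recurse on the suffix $r$, but under a generalized order the comparison at a shifted position uses a different $<_k$, so such a recursion is invalid. The resolution is to avoid recursion entirely and instead track the \emph{absolute} index $mn + j_0$ of the first disagreement, verifying that it is identical for $u^\omega$ versus $v$ and for $uv$ versus $v$. The only delicate bookkeeping is to confirm this shared index and, in particular, that no earlier disagreement occurs (which follows since $u^m$ is a common prefix and $r[j]$ matches the $j$-th letter of $u$ for all $j < j_0$).
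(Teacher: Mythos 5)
Your proof is correct and follows essentially the same approach as the paper: decompose $v = u^m r$ using the maximal power of $u$ that is a prefix of $v$, and observe that both comparisons ($u^\omega$ vs.\ $v$ and $uv$ vs.\ $v$) are decided at the same absolute index within the next block of length $|u|$, hence by the same order $<_{mn+j_0}$. Your version is marginally more explicit than the paper's (you track the exact disagreement index, treat both directions of the biconditional in one stroke, and handle the degenerate case $u^\omega = v$ that the paper leaves implicit), but the underlying argument is identical.
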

\begin{proof}
Suppose $u^\omega > v$.  Let $j$ be the largest integer such that $u^j \sim v$.  Hence $v = u^jv'$ for some infinite word $v' \not\sim u$.  Thus, the comparison between $uv$ and $v$ happens between index  $j|u| + 1$ and index $(j+1)|u|$, inclusive.  In particular, $uv \sim u^{j+1} > v$.

Now suppose $uv > v$.  Let $k$ be the largest index such that $u^k \sim v$.  Thus, the comparison between $uv$ and $v$ happens between index  $k|u| + 1$ and index $(k+1)|u|$, inclusive.  In particular, $u^\omega \sim u^{k+1} \sim uv > v$.  

The proof with the reverse inequalities proceeds analogously.
\end{proof}

In order to show the existence and uniqueness of generalized Lyndon factorizations of infinite words, we will invoke a theorem of Reutenauer which gives the analogous result for finite words \cite{Reu}. 

\begin{theorem}\emph{\cite{Reu,DRR}}\label{C3}
Any finite word has a unique nonincreasing factorization into generalized Lyndon words.
\end{theorem}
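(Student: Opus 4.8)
The plan is to prove existence and uniqueness separately, and throughout to translate every comparison of finite rotations into a comparison of $\omega$-powers via Lemma \ref{C1}. This is the device that keeps the position-dependent order under control, since an $\omega$-power is an honest phase-$1$ infinite word, whereas a finite rotation mixes phases. For existence I would first isolate the \emph{concatenation lemma}: if $u,v$ are finite generalized Lyndon words with $u^\omega < v^\omega$, then $uv$ is generalized Lyndon. Granting this, existence follows by a merging argument. Write $w = a_1 a_2 \cdots a_m$ as the product of its letters, each of which is trivially generalized Lyndon. Whenever two adjacent factors $\ell_i,\ell_{i+1}$ in the current factorization violate the nonincreasing condition, i.e. $\ell_i^\omega < \ell_{i+1}^\omega$, replace them by the single factor $\ell_i\ell_{i+1}$, which is generalized Lyndon by the concatenation lemma. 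Each such step strictly decreases the number of factors, so the process terminates; at termination no adjacent pair is increasing, which is exactly the statement that the factorization is nonincreasing.

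The concatenation lemma is where I expect the real work to lie, and it is the main obstacle. One must show $uv < yx$ for every proper rotation $uv = xy$, splitting into three cases according to whether the cut falls inside $u$, at the boundary, or inside $v$. The boundary case $uv < vu$ is immediate from Lemma \ref{C1}: since $|uv|=|vu|$, the strict inequality $(uv)^\omega<(vu)^\omega$ must resolve within the first $|uv|$ letters. The difficulty is the internal cuts, precisely because the order is position-dependent: the copy of $v$ occurring inside a rotated word such as $u_2 v u_1$ begins at a shifted index and is therefore compared using $<_{|u_2|+1}, <_{|u_2|+2},\dots$, so the generalized Lyndon property of $v$, which is a statement about the phase-$1$ order, cannot be invoked directly. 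I would handle this by again passing to $\omega$-powers: compare $(uv)^\omega$ against the $\omega$-power of the rotation and use Lemma \ref{C1} together with the hypothesis $u^\omega<v^\omega$ and the primitivity of $u$ and $v$ (Corollary \ref{I}) to force the strict inequality, then read off the finite comparison from the first period.

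For uniqueness I would show that in any nonincreasing generalized Lyndon factorization $w=\ell_1\cdots\ell_n$ the first factor $\ell_1$ is exactly the longest generalized Lyndon prefix of $w$; uniqueness then follows by induction on $|w|$ after stripping $\ell_1$. Since $\ell_1$ is itself a generalized Lyndon prefix, it remains only to rule out a strictly longer one. A repeated application of Lemma \ref{C1} gives $(\ell_1\cdots\ell_n)^\omega \ge \ell_n^\omega$ and, more usefully, bounds $(\ell_2\cdots\ell_n)^\omega$ from above by $\ell_1^\omega$; combined with Lemma \ref{H} this should show that any prefix extending past $\ell_1$ has a proper suffix no larger than itself, contradicting its generalized Lyndon property. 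The obstacle here is again phase-shifting, since the suffix realizing the contradiction starts in the interior of $w$; I would phrase the comparison through $\omega$-powers and the $\lesssim$/$\sim$ machinery of the Preliminaries rather than through direct finite comparisons, much as in the proof of Theorem \ref{G}.
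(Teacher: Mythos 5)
You should know at the outset that the paper never proves Theorem \ref{C3}; it is imported from Reutenauer \cite{Reu} and Dolce--Restivo--Reutenauer \cite{DRR}, so your proposal can only be judged against those proofs and against correctness. Your existence half is essentially the combinatorial route of \cite{DRR}: the concatenation lemma (if $u,v$ are generalized Lyndon and $u^\omega < v^\omega$, then $uv$ is generalized Lyndon) is true and is the engine of that proof, and the merging argument from single letters then works verbatim. But your handling of the internal cuts is a gesture, not a proof: ``use Lemma \ref{C1} together with the hypothesis and primitivity to force the strict inequality'' is precisely the part that takes a page of careful case analysis on where the first difference between the relevant $\omega$-powers occurs, and you acknowledge but do not close that gap.

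The genuine failure is in the uniqueness half, which rests on a false claim: in a generalized lexicographic order, the first factor of the nonincreasing factorization need \emph{not} be the longest generalized Lyndon prefix. Take the alternating order on $\{0,1\}$ and $w = 0101$. Under the paper's definition both $01$ and $010$ are Galois words ($01 < 10$ at position $1$; $010 < 100$ at position $1$ and $010 < 001$ at position $2$, where the order is reversed), yet the unique nonincreasing factorization of $w$ is $01 \cdot 01$: the competitor $010 \cdot 1$ is not nonincreasing because $(010)^\omega < 1^\omega$ at position $1$. So the first factor $01$ is strictly shorter than the generalized Lyndon prefix $010$. (The paper's closing example asserts ``$01$ is not Galois,'' but under the paper's own Section \ref{2} definition $01$ is strictly smallest among its rotations; what fails there is that $(01)^\omega$ is not an \emph{infinite} Galois word.) The paper itself warns in its Further Directions that ``simple characterizations such as longest Lyndon prefix fail.'' Your supporting step collapses for the same reason: from $\ell_1^\omega \geq \ell_2^\omega \geq \cdots$ one cannot deduce $(\ell_2\cdots\ell_n)^\omega \leq \ell_1^\omega$ by repeated use of Lemma \ref{C1}; for instance $0\cdot 0\cdot 01$ is a nonincreasing Galois factorization, yet $(0\cdot 01)^\omega = (001)^\omega > 0^\omega$ under the alternating order, since the first difference sits at the odd position $3$. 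This failure of products to stay below the first factor is exactly the phase-shifting obstruction you were trying to avoid, and it is why both the uniqueness proof in \cite{DRR} and the present paper's Section \ref{5} machinery (Lemmas \ref{U}, \ref{Q}, \ref{S}) compare two candidate factorizations directly rather than inducting on a greedy ``longest prefix'' characterization. Your uniqueness argument must be replaced, not repaired.
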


\begin{theorem}\label{L}
An infinite word with an infinite generalized Lyndon suffix has a unique factorization into generalized Lyndon words, and this factorization is finite.  Furthermore, the last term in this factorization is the first generalized Lyndon suffix by index.
\end{theorem}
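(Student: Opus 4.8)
The plan is to locate the distinguished infinite factor first and then build everything around it. Let $\ell = w[j_0,\infty)$ be the first generalized Lyndon suffix by index, i.e. $j_0$ is minimal with $w[j_0,\infty)$ generalized Lyndon, and write $w = p\ell$ with $p = w[1,j_0-1]$ finite. By Theorem~\ref{C3} the prefix $p$ has a unique nonincreasing generalized Lyndon factorization $p = \ell_1\cdots\ell_n$. I would then prove the theorem in three pieces: (i) $w = \ell_1\cdots\ell_n\ell$ is a genuine nonincreasing factorization (existence); (ii) any nonincreasing factorization is finite (this is immediate from Theorem~\ref{G}: since $w$ has a generalized Lyndon suffix it is not an infinite nonincreasing product of finite generalized Lyndon words, and being infinite it cannot be a finite product of finite words, so the last term is infinite); (iii) any nonincreasing factorization ends in $\ell$, which simultaneously yields uniqueness and the stated characterization.

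For existence the only nontrivial point is the junction inequality $\ell_n^\omega \geq \ell$, since $\ell_1\cdots\ell_n$ is already nonincreasing. I would argue by contradiction: suppose $\ell_n^\omega < \ell$, so by Lemma~\ref{H} we have $\ell_n\ell < \ell$. Consider the nonempty suffixes $u$ of $\ell_n$ with $u\ell < \ell$; this set is nonempty because $u = \ell_n$ qualifies, so I may choose a \emph{shortest} such suffix $u^*$. I claim $u^*\ell$ is itself generalized Lyndon. Indeed, every proper suffix of $u^*\ell$ that lies inside $\ell$ is $\geq \ell > u^*\ell$ since $\ell$ is generalized Lyndon, while every proper suffix of the form $v\ell$ with $v$ a shorter nonempty suffix of $u^*$ satisfies $v\ell \geq \ell > u^*\ell$ by the minimality of $u^*$. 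Hence all proper suffixes of $u^*\ell$ strictly exceed it, so $u^*\ell$ is generalized Lyndon; but it starts at index $j_0 - |u^*| < j_0$, contradicting the minimality of $j_0$. Therefore $\ell_n^\omega \geq \ell$ and the factorization exists.

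For uniqueness and the characterization, let $w = m_1\cdots m_k$ be any nonincreasing generalized Lyndon factorization, with $m_k$ infinite by part (ii). Since $m_k$ is a generalized Lyndon suffix, it starts at some index $j' \geq j_0$; it suffices to show $j' = j_0$, for then $m_k = \ell$ and Theorem~\ref{C3} forces $m_1\cdots m_{k-1} = p = \ell_1\cdots\ell_n$ termwise. Suppose instead $j' > j_0$. Then $m_k$ is a proper suffix of the generalized Lyndon word $\ell$, so $m_k > \ell$. Writing $\ell = g\,m_k$, where $g = m_t'm_{t+1}\cdots m_{k-1}$ is the finite (nonempty) portion of $\ell$ preceding $m_k$ and $m_t'$ is the suffix of the block $m_t$ containing $j_0$, Lemma~\ref{C1} gives $(m_t')^\omega \geq m_t^\omega$ and, iterated along the nonincreasing chain $(m_t')^\omega \geq m_{t+1}^\omega \geq \cdots \geq m_{k-1}^\omega$, yields $g^\omega \geq m_{k-1}^\omega$. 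Since $m_k$ is aperiodic (a generalized Lyndon word is never periodic), the nonincreasing condition gives $m_{k-1}^\omega > m_k$, so $g^\omega > m_k$ and Lemma~\ref{H} gives $\ell = g\,m_k > m_k$, contradicting $m_k > \ell$. Hence $m_k = \ell$, completing uniqueness and showing the last term is the first generalized Lyndon suffix.

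The step I expect to be the main obstacle is the junction inequality $\ell_n^\omega \geq \ell$. A direct attack wants to show that $\ell_n\ell$ is generalized Lyndon by comparing each rotated suffix $u'\ell$ against $\ell_n\ell$, which reduces via Lemma~\ref{H} to controlling $x^\omega$ versus $u'\ell$ for $\ell_n = xu'$; because the order is position-dependent, one cannot freely shift the comparison past $|u'|$, and the case where $u'$ is a prefix of $x^\omega$ is genuinely delicate. The shortest-witness device above is what sidesteps this entirely: by selecting the \emph{minimal} suffix $u^*$ with $u^*\ell < \ell$, generalized Lyndon-ness of $u^*\ell$ becomes automatic, and the contradiction comes from minimality of $j_0$ rather than from any rotation computation. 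I would expect the remaining care to lie in the precise handling of the $\sim$ and $\lesssim$ conventions when reading off the suffix comparisons and in the degenerate boundary cases ($k=1$, or $j_0$ sitting exactly at a block boundary).
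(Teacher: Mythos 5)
Your proposal is correct and follows essentially the same route as the paper's proof: existence via the shortest-suffix witness producing an earlier generalized Lyndon suffix (contradicting minimality of $j_0$), finiteness via Theorem~\ref{G}, and the last-term characterization via the inductive Lemma~\ref{C1} chain followed by Lemma~\ref{H}, with Theorem~\ref{C3} finishing uniqueness of the finite prefix. The only differences are cosmetic: the paper selects the shortest suffix $s$ of $\ell_n$ minimizing $s\ell$ rather than the shortest with $s\ell < \ell$, and it phrases the last-term argument as showing any hypothetical earlier generalized Lyndon suffix $s$ satisfies $s \geq \ell$, which is the mirror image of your derivation $\ell > m_k$.
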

\begin{proof}
We first show existence.  Let $\ell$ be the first generalized Lyndon suffix of $w$ by index, that is, $w = v\ell$ where the length of $v$ is minimum such that $\ell$ is generalized Lyndon.  Let $\ell_1, \dots, \ell_n$ be the unique nonincreasing factorization of $v$ from Theorem \ref{C3}.  It is enough to show that $\ell_n^\omega \geq \ell$, as this will yield $\ell_1, \dots, \ell_n, \ell$ as a nonincreasing generalized Lyndon factorization of $w$.

Suppose that $\ell_n^\omega < \ell$.  By Lemma \ref{H}, this implies $\ell_n\ell < \ell$.  Let $s$ be the shortest (not necessarily proper) suffix of $\ell_n$ such that $s\ell$ is minimal.  Note that we have $s\ell \leq \ell_n\ell < \ell$, so $s$ is nonempty.  However, by construction we have $s\ell \leq s'\ell$ for every suffix $s'$ of $s$. Notably, $s\ell\leq \ell\leq\ell'$ for any suffix $\ell'$ of $\ell$ because $\ell$ is generalized Lyndon. Thus $s\ell$ is generalized Lyndon.  This contradicts our choice of $\ell$ to be the first generalized Lyndon suffix of $w$. Therefore $\ell_n^\omega \geq \ell$, so we have produced a nonincreasing factorization of $w$.

By Theorem \ref{G}, any factorization of $w$ must have only finitely many terms. Let $\ell_1, \dots, \ell_n \ell$ be a nonincreasing factorization of $w$ into generalized Lyndon words.  Suppose, seeking a contradiction, that $\ell$ is not the longest generalized Lyndon suffix of $w$, i.e., there is a suffix $s$ of $w$ of the form $u\ell_{j+1}\cdots\ell_n\ell$ where $u$ is a suffix of $\ell_j$.  From the nonincreasing property of the factorization $w$ and the generalized Lyndon property of $\ell_j$, we know $u^\omega \geq \ell_j^\omega \geq \cdots \geq \ell_n^\omega \geq \ell$.  By Lemma \ref{C1}, $(u\ell_j)^\omega \geq \ell_j^\omega$.  Inductively, we find $(u\ell_j\cdots\ell_n)^\omega \geq \ell_n^\omega \geq \ell$.  Thus,  by Lemma \ref{H}, we have $s = u\ell_j\cdots\ell_n\ell \geq \ell$, contradicting that $s$ is generalized Lyndon.  

Now that we have uniquely determined $\ell$, the other factors $\ell_1,\dots,\ell_n$ are uniquely determined.  This follows because the prefix $w[1, |\ell_1|+\cdots + |\ell_n|]$ of $w$ has a unique nonincreasing factorization into generalized Lyndon words.  Thus by our initial assumption that $\ell_1, \dots, \ell_n, \ell$ is a nonincreasing factorization of $w$, the unique factorization of $w[1, |\ell_1|+\cdots + |\ell_n|]$ must be $\ell_1, \dots, \ell_n$.
\end{proof}
\section{Existence of Infinite Factorizations}\label{4}

In this section, we describe a method to construct an infinite factorization of a word with no generalized Lyndon suffix by taking a limit of the finite factorizations of some of its prefixes.

\begin{lemma}\label{A}
If a primitive infinite word has infinitely many generalized Lyndon prefixes, then it is a generalized Lyndon word.
\end{lemma}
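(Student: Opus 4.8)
The plan is to show directly that $w$ is strictly smaller than every proper suffix of itself. Write $w = uv$, where $u$ is a nonempty finite prefix and $v$ is the corresponding infinite suffix; I must establish $w < v$. Since $u$ is finite and $v$ is infinite, Lemma \ref{H} reduces this to showing $u^\omega < v$. Because the generalized lexicographic order is total on infinite words, I only need to locate a genuine first index at which $u^\omega$ and $v$ disagree, and the strict inequality will follow.

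First I would dispense with the degenerate case using primitivity: if $u^\omega = v$, then $w = uv = uu^\omega = u^\omega$ would be periodic, contradicting that $w$ is primitive. Hence $u^\omega$ and $v$ are distinct infinite words, and they first differ at some finite index $i$.

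Next I would exploit the hypothesis. Since $w$ has infinitely many generalized Lyndon prefixes, their lengths are unbounded, so I may choose a generalized Lyndon prefix $p$ of $w$ with $|p| > |u| + i$. Writing $p = uq$, the finite generalized Lyndon property applied to this factorization gives $uq < qu$; as $uq$ and $qu$ have equal length, this is equivalent to $(uq)^\omega < (qu)^\omega$, so the equivalence of conditions (4) and (1) in Lemma \ref{C1} yields $u^\omega < q^\omega$. Here $q = v[1,|q|]$ is a prefix of $v$ of length $|q| = |p| - |u| > i$.

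The key step, and the main obstacle, is to transfer the strict inequality $u^\omega < q^\omega$ from the finite approximation $q$ to the full suffix $v$. Since $q$ is a prefix of $v$ of length exceeding $i$, the words $u^\omega$, $q^\omega$, and $v$ all agree on indices $1$ through $i-1$, while at index $i$ we have $q^\omega[i] = v[i] \neq u^\omega[i]$ by the choice of $i$. Thus the comparison $u^\omega$ versus $q^\omega$ and the comparison $u^\omega$ versus $v$ have the same first differing index and the same order relation there, so $u^\omega < q^\omega$ forces $u^\omega < v$. The crucial subtlety is that this passage to the limit of the finite conditions preserves strictness, and this is precisely where primitivity is essential: it guarantees that the finite index $i$ exists rather than $u^\omega$ and $v$ coinciding. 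Combining with Lemma \ref{H} gives $w = uv < v$, and since $v$ ranged over all proper suffixes of $w$, we conclude that $w$ is an infinite generalized Lyndon word.
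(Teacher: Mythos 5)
Your proof is correct, but it takes a different route from the paper's. The paper argues by contraposition and uses nothing but the definitions: if a primitive $w$ is not generalized Lyndon, pick the minimal $m$ with $w[m,\infty) < w$ (primitivity is what rules out $w[m,\infty) = w$, which would make $w$ periodic), let $i$ be the comparison index, and observe that every prefix $w[1,n]$ with $n \geq m+i$ is long enough to contain that comparison, so its rotation starting at position $m$ is strictly smaller and the prefix fails to be generalized Lyndon; hence only finitely many prefixes can be generalized Lyndon. You instead argue directly: you reduce $w < v$ to $u^\omega < v$ via Lemma \ref{H}, use primitivity to guarantee a finite first-difference index $i$ between $u^\omega$ and $v$, extract from a generalized Lyndon prefix $p = uq$ with $|q| > i$ the rotation inequality $uq < qu$, lift it to $u^\omega < q^\omega$ via the equivalence $(4)\Rightarrow(1)$ of Lemma \ref{C1}, and transfer the strict inequality to $v$ since $q^\omega$ and $v$ agree through index $i$. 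Each step checks out, and there is no circularity, since Lemmas \ref{C1} and \ref{H} are established (in Section \ref{3}) independently of this lemma. The trade-off: the paper's contrapositive is shorter and entirely self-contained, while your direct argument makes visible the positive mechanism---how the rotation inequalities of longer and longer Lyndon prefixes pass to the limit against every suffix---at the cost of invoking the $\omega$-power machinery. Both proofs ultimately rest on the same two facts: primitivity forces a finite comparison index, and sufficiently long prefixes witness that comparison.
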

\begin{proof}
Let $w$ be a primitive word which is not infinite generalized Lyndon, and let $m\in \N$ be minimal such that $w[m,\infty) < w$.  Let $i$ be the index of comparison between $w[m,\infty)$ and $w$.  Then for any $n \geq m + i$, we have $w[m,n] \sim w[m,\infty) < w \sim w[1,n]$ with a comparison at index $i$.  Thus $w[1,n]$ is not generalized Lyndon for any $n \geq m+i$, so we can conclude that $w$ has finitely many generalized Lyndon prefixes.
\end{proof}

\begin{lemma}\label{M}
If $\ell$ is a finite word that is not generalized Lyndon, then $\ell^\omega$ has finitely many generalized Lyndon prefixes.  
\end{lemma}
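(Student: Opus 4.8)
The plan is to reduce to the primitive root of $\ell$ and then reproduce the counting argument of Lemma~\ref{A}, the only new ingredient being that a single non-generalized-Lyndon rotation of the period yields a strict comparison of $\omega$-powers at a \emph{fixed} index, which is what keeps the position-dependent orders under control. Write $\ell = \rho^k$, where $\rho$ is the primitive root of $\ell$, so that $\ell^\omega = \rho^\omega$; it then suffices to bound the lengths of the generalized Lyndon prefixes of $\rho^\omega$.

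Suppose $\rho$ is not generalized Lyndon. Then there is a nontrivial factorization $\rho = uv$ with $uv \not< vu$, and since $\rho$ is primitive its rotations are distinct (Corollary~\ref{I}), so $vu < uv = \rho$ strictly. As equal-length words, $vu$ and $uv$ agree up to some index $i-1$ and differ at a fixed index $i \le |\rho|$, with $vu$ smaller under $<_i$. I would then propagate this to the $\omega$-powers: deleting the prefix $u$ from $\rho^\omega = (uv)^\omega$ gives the suffix $\rho^\omega[|u|+1,\infty) = (vu)^\omega$, and because $(vu)^\omega$ and $\rho^\omega$ first differ exactly where $vu$ and $uv$ do, we get $(vu)^\omega < \rho^\omega$ with the comparison occurring at the \emph{fixed} index $i$.

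With this strict fixed-index comparison in hand, the counting argument of Lemma~\ref{A} transfers verbatim. Setting $w = \rho^\omega$ and $m = |u|+1$, we have $w[m,\infty) = (vu)^\omega < w$ with first difference at index $i$. For every $n \ge m+i$, the rotation $w[m,n]\,w[1,m-1]$ of the prefix $w[1,n]$ satisfies $w[m,n] \sim w[m,\infty) < w \sim w[1,n]$, and the differing index $i \le n-m+1$ lies inside both length-$n$ words, so this rotation is strictly smaller than $w[1,n]$; hence $w[1,n]$ is not generalized Lyndon. Thus every generalized Lyndon prefix of $\rho^\omega = \ell^\omega$ has length below $m+i \le 2|\rho|$, leaving only finitely many.

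The step I expect to be the main obstacle is precisely the reduction hypothesis that the primitive root $\rho$ fails to be generalized Lyndon, which is what supplies the strict inequality $vu < uv$ and pins the comparison to the bounded index $i$. This is the exact analogue of the point where Lemma~\ref{A} invokes primitivity to upgrade $\le$ to $<$. The delicate case is therefore $\ell = \rho^k$ with $\rho$ itself generalized Lyndon: here no rotation of $\rho$ lies below $\rho$, the finite comparison is no longer strict, and the would-be smaller rotation of a long prefix first differs from it only at an index growing with the prefix length, where the orders $<_n$ vary and the estimate collapses. Handling $\ell^\omega = \rho^\omega$ in this regime is where the argument must either invoke the hypothesis on the primitive root or be supplied by a separate comparison, and it is the crux on which the bound $|w[1,n]| < 2|\rho|$ depends.
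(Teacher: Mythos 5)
Your treatment of the case you do handle is correct, and it is essentially the paper's own argument: exhibit a nontrivial factorization with $vu < uv$ strictly, observe that every prefix of $\ell^\omega$ of length at least $|u|+|\ell|$ begins with $uv$ and contains $vu$ at the fixed position $|u|+1$, and conclude that the comparison at the fixed index kills all sufficiently long prefixes. The paper skips your primitive-root reduction and simply asserts that a non-generalized-Lyndon word admits a factorization with $vu < uv$ strictly. One small correction to your write-up: the fact that a primitive word differs from all of its nontrivial rotations follows from Lemma \ref{C2} (commuting words are powers of a common word), not Corollary \ref{I}, since in your setting neither $u$ nor $v$ is known to be generalized Lyndon.

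The case you isolate as the crux --- $\ell = \rho^k$ with $k \geq 2$ and $\rho$ generalized Lyndon --- is not a gap you failed to close: the lemma as stated is false there, and the paper's proof breaks at exactly that point. If $\rho$ is generalized Lyndon, then every rotation of $\rho^k$ is the $k$-th power of a rotation of $\rho$, hence is either equal to $\rho^k$ or strictly larger than it (the strict comparison already occurring within the first $|\rho|$ letters); so the failure of the generalized Lyndon condition for $\ell$ comes only from equalities $uv = vu$, and no factorization with $vu < uv$ exists --- the paper's opening sentence is unjustified and unfixable here. Concretely, take the alternating order and $\ell = 0101 = (01)^2$: this $\ell$ is imprimitive, hence not generalized Lyndon, yet $\ell^\omega = (01)^\omega$ has infinitely many generalized Lyndon prefixes, namely the Galois words $(01)^k 0$ --- the very example the paper gives at the end of Section \ref{6}. (If one instead follows that remark's claim that $01$ itself is not Galois, then $\ell = 01$ is an even more direct counterexample; under the paper's stated definition $01$ is Galois, but either way Lemma \ref{M} fails.) So the lemma needs precisely the hypothesis you identified: the primitive root of $\ell$ must not be generalized Lyndon, i.e., $\ell$ is not a power of a generalized Lyndon word. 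Reassuringly, this weaker statement --- the one you actually proved --- is all the paper ever uses: Theorem \ref{O} applies the lemma to powers of nontrivial rotations of a generalized Lyndon word, which are primitive and not generalized Lyndon, and Theorem \ref{N} applies it only in the branch where $w$ is not a power of any generalized Lyndon word.
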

\begin{proof}
If $\ell$ is not generalized Lyndon, then we can write $\ell = uv$ where $vu < uv$ for some prefix $u$.  Observe that $vu$ will be a factor of any prefix of $\ell^\omega$ having length at least $|\ell| + |u|$, and $uv$ will be a prefix of any such prefix of $\ell^\omega$.  Thus any prefix of $\ell^\omega$ having length at least $|\ell| + |u|$ is not generalized Lyndon.
\end{proof}

\begin{theorem}\label{N}
An infinite word has a nonincreasing factorization into generalized Lyndon words.
\end{theorem}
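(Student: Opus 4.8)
The plan is to prove Theorem~\ref{N} by combining the two cases already handled: words with a generalized Lyndon suffix (Theorem~\ref{L}) and words without one. If $w$ has an infinite generalized Lyndon suffix, then Theorem~\ref{L} already produces a finite nonincreasing factorization, so there is nothing to do. Thus I would immediately reduce to the case where $w$ has \emph{no} generalized Lyndon suffix, which is precisely the setting announced for Section~\ref{4}: construct an infinite factorization as a limit of finite factorizations of prefixes.

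\medskip

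First I would set up the limiting construction. For each $k$, consider the prefix $w[1,k]$, which by Theorem~\ref{C3} has a unique nonincreasing generalized Lyndon factorization $w[1,k] = \ell_1^{(k)} \cdots \ell_{m_k}^{(k)}$. The goal is to show that as $k \to \infty$, the \emph{initial} factors stabilize, yielding a well-defined infinite sequence $\ell_1, \ell_2, \dots$ with $w = \prod_i \ell_i$ and $\ell_i^\omega \geq \ell_{i+1}^\omega$. The key stabilization claim is that the first factor $\ell_1^{(k)}$ is eventually constant in $k$ (equal to the longest generalized Lyndon prefix of $w$, roughly speaking), and then one peels it off and argues inductively on the remaining suffix. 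I expect to use Lemma~\ref{A} and Lemma~\ref{M} here: Lemma~\ref{A} guarantees that a primitive word with infinitely many generalized Lyndon prefixes is itself infinite generalized Lyndon, while Lemma~\ref{M} rules out the periodic obstruction. Since $w$ has no generalized Lyndon suffix, it is in particular aperiodic on every suffix, so these lemmas can be invoked to control how the leading factors behave.

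\medskip

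The main technical work, and the step I expect to be the hardest, is proving that the leading factors actually converge rather than growing without bound or oscillating. Concretely, one must show there is a well-defined \emph{first} factor $\ell_1$: either the lengths $|\ell_1^{(k)}|$ stabilize to a finite value, giving a finite first factor, or they tend to infinity, in which case the first factor should be taken to be an infinite generalized Lyndon word. The delicate point is ruling out that we enter the infinite-suffix regime prematurely — but Theorem~\ref{G} tells us that if $w$ were a nonincreasing product of \emph{finite} generalized Lyndon words it would have no generalized Lyndon suffix (consistent with our hypothesis), and conversely the only way the leading factor can be infinite is if the relevant prefixes are generalized Lyndon for infinitely many lengths, which by Lemma~\ref{A} forces an infinite generalized Lyndon factor. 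I would handle the boundedness/monotonicity of $|\ell_1^{(k)}|$ using the nonincreasing comparison inequalities via Lemma~\ref{C1} and Lemma~\ref{H} to compare $\ell_1^{(k)}$ against later factors across different values of $k$.

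\medskip

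Once the first factor $\ell_1$ is pinned down, I would finish by induction: write $w = \ell_1 w'$ where $w'$ is the corresponding suffix, verify that $w'$ again has no generalized Lyndon suffix (so the construction applies to $w'$), and confirm the nonincreasing condition $\ell_1^\omega \geq \ell_2^\omega$ carries over from the finite factorizations of the prefixes. Iterating produces the full infinite nonincreasing factorization $w = \prod_{i=1}^\infty \ell_i$. The nonincreasing property at each stage follows because it holds in every finite prefix factorization by Theorem~\ref{C3}, and the $\omega$-power comparisons are preserved in the limit by the prefix-stability of the generalized lexicographic order together with Lemma~\ref{C1}. Uniqueness is deliberately deferred to Section~\ref{5}, so for this theorem I only need existence, which keeps the argument focused on the convergence of the leading factors as the one genuinely new ingredient.
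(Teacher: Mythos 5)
Your high-level architecture (reduce to the no-generalized-Lyndon-suffix case via Theorem~\ref{L}, then assemble a factorization from the Theorem~\ref{C3} factorizations of prefixes) matches the paper, but the two steps you lean on are flawed. First, your ``key stabilization claim'' --- that the first factor $\ell_1^{(k)}$ of the factorization of $w[1,k]$ is \emph{eventually constant} in $k$, roughly the longest generalized Lyndon prefix of $w$ --- is never proved, is stronger than what is needed, and its parenthetical justification is exactly what the paper warns against in its concluding section: for generalized Lyndon factorizations, ``simple characterizations such as longest Lyndon prefix fail.'' The paper needs only a pigeonhole observation: since $w$ is not generalized Lyndon, Lemmas~\ref{A} and~\ref{M} (in the relevant cases) give that $w$ has only \emph{finitely many} generalized Lyndon prefixes, so some single prefix $\ell_1$ occurs as the first factor of the factorization of $w[1,n]$ for infinitely many $n$; one then works along that infinite subsequence of prefixes and repeats the pigeonhole at each stage. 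Eventual constancy is never needed, and your proposal contains no argument for it --- nor is one available from Lemmas~\ref{C1} and~\ref{H} alone; pinning down the actual first factor is essentially the uniqueness problem deferred to Section~\ref{5}, so relying on it here is circular.

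Second, the assertion ``since $w$ has no generalized Lyndon suffix, it is in particular aperiodic on every suffix'' is false: any periodic word, e.g.\ $0^\omega$, has no generalized Lyndon suffix, because a periodic word equals some of its proper suffixes and so is never strictly smallest among them. Hence you cannot invoke Lemma~\ref{A} wholesale (it requires primitivity), and the eventually periodic case --- which the paper treats separately --- goes unhandled. This case is not a technicality: it is precisely where the inductive pigeonhole can break down, namely when some suffix $w_k$ is a power of a generalized Lyndon word $\ell$, so that $w_k$ has infinitely many generalized Lyndon prefixes. The paper shows via Lemmas~\ref{A} and~\ref{M} that this is the \emph{only} way the process halts, verifies $\ell_k^\omega \geq \ell^\omega$ using the unbounded generalized Lyndon prefixes $p_i$ of $w_k$ satisfying $\ell_k^\omega \geq p_i^\omega$, and closes the factorization with the infinite tail $\ell \cdot \ell \cdot \ell \cdots$. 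Your proposal produces no factorization in this situation. Relatedly, the ``infinite first factor'' regime you propose to handle cannot occur at all once $w$ has no generalized Lyndon suffix --- an infinite generalized Lyndon factor appearing anywhere in the factorization would itself be a generalized Lyndon suffix of $w$ --- so your case analysis spends effort on a vacuous possibility while the genuinely missing case (periodic tails) is the one that requires work.
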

\begin{proof} 
Fix an infinite word $w$.  Theorem \ref{L} completes the proof in the case that $w$ has an infinite generalized Lyndon suffix.  So we can assume that $w$ has no infinite generalized Lyndon suffix.  In particular, $w$ is not generalized Lyndon.

We will first consider the case where $w$ is not eventually periodic.  Since $w$ is not generalized Lyndon, Lemma \ref{A} implies that $w$ has finitely many generalized Lyndon prefixes.   Thus one of its generalized Lyndon prefixes must appear in the factorization of $w[1,n]$ yielded by Theorem \ref{C3} for infinitely many $n \in \N$.  Let $\ell_1$ be such a prefix, and let $w = \ell_1w_1$.  

We will now inductively construct a factorization of $w$.  Suppose we can write $w = \ell_1 \cdots \ell_k w_k$ such that each $\ell_j$ is a finite generalized Lyndon word, $\ell_1^\omega \geq \cdots \geq \ell_k^\omega$, and $w$ has infinitely many prefixes whose factorizations begin with $\ell_1, \dots, \ell_k$.  Since $w$ has no generalized Lyndon suffixes, $w_k$ is not generalized Lyndon, so it must have finitely many generalized Lyndon prefixes.  Since infinitely many prefixes of $w$ have factorizations beginning with $\ell_1, \dots, \ell_k$, one of the generalized Lyndon prefixes of $w_k$, which we label $\ell_{k+1}$, must be such that infinitely many prefixes of $w$ have factorizations beginning with $\ell_1,\dots, \ell_k, \ell_{k+1}$.  We can then write $w = \ell_1\cdots \ell_{k+1}w_{k+1}$.  Note that by construction, $\ell_k^\omega \geq \ell_{k+1}^\omega$.  By induction, we get a nonincreasing generalized Lyndon factorization $\ell_1,\ell_2,\dots$ of $w$.

Now suppose that $w$ is eventually periodic.  If $w$ is a power of a generalized Lyndon word $\ell$, we can use the factorization $w = \ell^\omega$.  Otherwise, $w$ is a power of a finite word that is not generalized Lyndon or $w$ is primitive.  In either case, Lemmas \ref{A} and \ref{M} imply that $w$ has finitely many generalized Lyndon prefixes.  We can thus apply the construction from the previous paragraph, in each step yielding a factorization of $w$ starting with $\ell_1,\cdots, \ell_k$.  This process will halt only if $w_k$ has infinitely many generalized Lyndon prefixes $p_i$ such that $\ell_1, \dots, \ell_k, p_i$ is the factorization of a prefix of $w$.  By Lemmas \ref{A} and \ref{M}, this implies that $w_k$ is a power of a generalized Lyndon word $\ell$.  Moreover, since the $p_i$'s have unbounded length and  $\ell_k^\omega \geq p_i^\omega$, we must have $\ell_k^\omega \geq \ell^\omega$.  Therefore $\ell_1, \dots, \ell_k, \ell^\omega$ is a factorization of $w$.  Thus, in any case, this construction yields a nonincreasing factorization of $w$ into generalized Lyndon words.
\end{proof}

\section{Uniqueness of Infinite Factorizations}\label{5}

We will determine the uniqueness of the factorization constructed in Section \ref{4}, handling first the eventually periodic words and then aperiodic words with no generalized Lyndon suffix. 

\begin{theorem}\label{O}
An eventually periodic infinite word has a unique nonincreasing factorization into generalized Lyndon words.
\end{theorem}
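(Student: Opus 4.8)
The plan is to handle the trivial case by appeal to earlier results and then isolate a single mechanism. If $w$ has an infinite generalized Lyndon suffix, Theorem~\ref{L} already provides a unique (finite) factorization, so I assume henceforth that $w$ has no infinite generalized Lyndon suffix. Under this assumption every nonincreasing factorization of $w$ must have infinitely many terms, each a \emph{finite} generalized Lyndon word: a factorization with finitely many terms would have an infinite last term, i.e.\ an infinite generalized Lyndon suffix of $w$, contrary to assumption. Writing $w = x r^{\omega}$ with $r$ primitive of length $q$, the whole problem becomes one about infinite factorizations with finite factors.

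The engine of the proof is the claim that every nonincreasing factorization $w = \ell_1 \ell_2 \cdots$ is eventually constant, with eventual value the unique generalized Lyndon word $\ell$ conjugate to $r$. Uniqueness of such a conjugate is immediate and order-independent: if $u,v$ are conjugate generalized Lyndon words with $u \neq v$, write $u = ab$ and $v = ba$ with $a,b$ nonempty; then the generalized Lyndon property of $u$ gives $u = ab < ba = v$, while that of $v$ gives $v = ba < ab = u$, a contradiction. Moreover the eventual value is forced to be such a conjugate: once the factors are constant, the tail of $w$ equals $\ell^{\omega}$, a purely periodic suffix of $w$, so $\ell$ is a primitive period of that suffix, hence (by uniqueness of the primitive period) a conjugate of $r$ of length $q$. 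In particular the eventual value $\ell$ is the same for \emph{every} factorization, which is exactly what will force two factorizations to agree.

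To prove eventual constancy I would argue entirely inside the given infinite factorization, rather than by taking limits of finite prefix factorizations. For all large $i$ the boundary $|\ell_1 \cdots \ell_{i-1}|$ lies beyond $|x|$, so the suffix $t_i := \ell_i \ell_{i+1}\cdots$ is purely periodic of period $q$ and takes only finitely many values (the conjugates of $r^{\omega}$); by pigeonhole some value $\sigma^{\omega}$ recurs infinitely often. Between two occurrences $t_i = t_j = \sigma^{\omega}$ the intervening block $z = \ell_i \cdots \ell_{j-1}$ satisfies $z\sigma^{\omega} = \sigma^{\omega}$, whence $z = \sigma^{m}$ by primitivity of $\sigma$, and by Theorem~\ref{C3} each such block carries the unique nonincreasing factorization of $\sigma^{m}$. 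The decisive point is that the recurring conjugate must itself be generalized Lyndon: if $\sigma$ were not, then any factor equal to a power of $\sigma$ is excluded, so the boundaries of these blocks produce, across two consecutive blocks, a last factor $G$ and a following first factor $F$ with $G^{\omega} \ge F^{\omega}$ (global nonincreasingness) while $F^{\omega} \ge G^{\omega}$ from the interior of the blocks; squeezing via Lemma~\ref{C1} forces all factors of a block to coincide, making $\sigma$ a power of a generalized Lyndon word and hence (being primitive) generalized Lyndon, a contradiction. Thus the recurring value is $\ell^{\omega}$, and since the factorization of $\ell^{\omega}$ is simply $\ell,\ell,\dots$, the factors are constant from the first such occurrence onward. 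With eventual constancy established, uniqueness follows quickly: given two factorizations, both are eventually equal to the same $\ell$, so $w = P\ell^{\omega} = Q\ell^{\omega}$ for finite prefixes $P=\ell_1\cdots\ell_{a-1}$ and $Q=m_1\cdots m_{b-1}$ with $\ell_{a-1}^{\omega},m_{b-1}^{\omega}\ge \ell^{\omega}$; assuming $|P|\le |Q|$ and using primitivity of $\ell$ gives $Q = P\ell^{\,j}$, and then $\ell_1,\dots,\ell_{a-1},\ell,\dots,\ell$ (with $j$ trailing copies) and $m_1,\dots,m_{b-1}$ are two nonincreasing generalized Lyndon factorizations of the finite word $Q$, which coincide by Theorem~\ref{C3}; hence $m_i = \ell_i$ for all $i$.

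I expect the main obstacle to be precisely the eventual-constancy claim. The subtle obstruction, already visible for $(ba)^{\omega}$ and $(bba)^{\omega}$, is that the factorization of a finite prefix of $w$ need not converge term-by-term to the factorization of $w$: a finite prefix may absorb its tail into a single long generalized Lyndon factor (e.g.\ $(ba)^{M}$ factors as $b,(ab)^{M-1}a$), whereas the infinite factorization instead repeats the short conjugate $\ell$. Consequently one cannot simply pass to a limit, and the forcing argument—identifying the infinitely recurring periodic suffix as the generalized-Lyndon-aligned one via the interplay of nonincreasingness (Lemmas~\ref{C1} and~\ref{H}) with finite uniqueness (Theorem~\ref{C3})—must be carried out carefully inside the infinite factorization itself; comparing $\omega$-powers of the candidate first factors of the common periodic suffix is the delicate step.
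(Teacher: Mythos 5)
Your overall architecture matches the paper's (reduce via Theorem~\ref{L} to the case of no infinite generalized Lyndon suffix, prove every infinite factorization is eventually constant equal to the generalized Lyndon conjugate $\ell$ of the period, finish with Theorem~\ref{C3}), and several of your pieces are correct and nicely packaged: the pigeonhole on the suffix values $t_i$, the observation that the block between two occurrences of $\sigma^\omega$ is by Theorem~\ref{C3} \emph{the} factorization of $\sigma^{m_k}$, and the endgame. The genuine gap is at what you yourself call the decisive point: the claim that the last factor $G$ of one block and the first factor $F$ of the \emph{next} block satisfy $F^\omega \ge G^\omega$ ``from the interior of the blocks.'' Interior nonincreasingness only compares factors within a single block, while $F$ and $G$ lie in different blocks, which are factorizations of possibly different powers $\sigma^{m_k}$ and $\sigma^{m_{k+1}}$; nothing you have established relates the first factor of one such factorization to the last factor of another. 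The structural facts that would make this immediate are unproven or outright false for generalized orders. For instance, it is false that the first factor $F$ of the unique factorization of a word $z$ satisfies $F^\omega \ge z^\omega$ (which would yield $F^\omega \ge \sigma^\omega \ge G^\omega$, the second inequality following from iterated Lemma~\ref{C1}): under the alternating order the unique factorization of $001$ is $0\cdot 01$, yet $0^\omega < (001)^\omega$, as they first differ at the (odd) position $3$. Likewise you cannot assume blocks of different exponents share factors: under the alternating order the factorization of $(10)^m$ is $1\cdot(01)^{m-1}0$, whose last factor depends on $m$. The paper itself warns that such shortcuts (e.g.\ ``first factor $=$ longest generalized Lyndon prefix'') fail in the generalized setting.

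The missing ingredient is precisely Lemma~\ref{M}, which your proposal never invokes and which is the engine of the paper's proof of this theorem. The repair inside your framework: if $\sigma$ is not generalized Lyndon, then by Lemma~\ref{M} the word $\sigma^\omega$ has only finitely many generalized Lyndon prefixes; each block's first factor $F_k$ is such a prefix, so a second pigeonhole gives two blocks $k<k'$ with the same first factor $F$. Global nonincreasingness then squeezes all $\omega$-powers between $F_k^\omega$ and $F_{k'}^\omega = F_k^\omega$ to be equal, and since generalized Lyndon words are primitive, equal $\omega$-powers force equal factors; hence every factor of block $k$ equals $F$, so $\sigma^{m_k} = F^{p}$, and primitivity of $\sigma$ gives $F=\sigma$, making $\sigma$ generalized Lyndon, the desired contradiction. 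With this patch your proof goes through, as a mildly different packaging of the paper's argument (the paper pigeonholes on factor values directly, applying Lemma~\ref{M} to the non-Lyndon rotations). One smaller point: the phrase ``the factorization of $\ell^\omega$ is simply $\ell,\ell,\dots$'' presupposes uniqueness for $\ell^\omega$, an instance of the theorem being proved; it is legitimate only because your blocks reduce it to Theorem~\ref{C3}, each block being the unique factorization of the finite word $\ell^{m_k}$, namely $m_k$ copies of $\ell$.
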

\begin{proof}
Fix an infinite word $w$ with a periodic suffix.  Observe that this implies we can write $w$ as $u\ell^\omega$ where $u$ is a (possibly empty) finite word and $\ell$ is a nonempty finite generalized Lyndon word. We may assume $w$ has no generalized Lyndon suffix, as this case is handled by Theorem \ref{L}.  

We first claim that the factorization (from Theorem \ref{N}) of $w = \prod_{i=1}^\infty \ell_i$ must terminate with $\ell^\omega$. Since $\ell$ is generalized Lyndon hence not equal to any of its rotations, we have that $\ell^\omega[i,\infty) = \ell^\omega$ if and only if $i-1$ is an integer multiple of $|\ell|$.  Moreover, if $i-1$ is not a multiple of $|\ell|$, then $\ell^\omega[i,\infty)$ is a power of a word which is not generalized Lyndon and hence has finitely many generalized Lyndon prefixes by Lemma \ref{M}.  If one of these generalized Lyndon prefixes, $\ell'$, appears infinitely many times in the factorization of $w$, then $(\ell')^\omega$ is a suffix of $w$. Since $(\ell')^\omega$ and $\ell^\omega$ are suffixes of $w$, they are powers of rotations of $\ell$ and $\ell'$, respectively. Because $\ell$ and $\ell'$ are generalized Lyndon, this means that $\ell=\ell'$.  That is, only finitely many terms of the factorization are not equal to $\ell$.  Thus, we can conclude $\ell_i = \ell$ for sufficiently large $i$.

Now suppose $\ell_1, \dots, \ell_n, \ell^\omega$ and $h_1, \dots,  h_m, \ell^\omega$ are two distinct factorizations of $w$.  Note that $|\ell_1\cdots\ell_n| - |h_1\cdots h_m|$ must be an integer multiple of $|\ell|$, as $\ell$ is a generalized Lyndon word and hence not equal to any of its rotations.  Without loss of generality, assume $|\ell_1\cdots\ell_n| - |h_1\cdots h_m|>0$.  In this case, there exists $k \in \N$ such that $\ell_1\cdots \ell_n = h_1\cdots h_m \ell^k$, which violates the uniqueness of the nonincreasing generalized Lyndon factorization for finite words from Theorem \ref{C3}.  Thus, the nonincreasing factorization of $w$ into generalized Lyndon words is unique.
\end{proof}

\begin{lemma}\label{U}
Let $w = v\ell_1\ell_2\cdots \ell_n u$ be a finite generalized Lyndon word where $n \in \Z_{\geq 0}$, $\ell_i$ is a finite generalized Lyndon word for all $i\in \{1,\ldots,n\}$, $v$ is a suffix of a finite generalized Lyndon word $\ell_0$, $u$ is a prefix of a finite generalized Lyndon word $\ell_{n+1}$, and $\ell_0^\omega \geq \ell_1^\omega \geq \cdots \geq \ell_{n+1}^\omega$. Then $u \sim v$ and $u\sim v \sim \ell_i\cdots \ell_n u$ for all $i \in \{1,\dots,n\}$.
\end{lemma}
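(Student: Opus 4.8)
The plan is to pin each of $v$ and the suffixes $s_i := \ell_i\cdots\ell_n u$ (together with $s_{n+1}:=u$) between an order-sensitive lower bound coming from the generalized Lyndon property of $w$ and an $\omega$-power bound coming from the factor hierarchy, and then read off the asserted prefix agreements from the resulting equalities. First I would record the $\omega$-power inequalities: since $v$ is a suffix of the generalized Lyndon word $\ell_0$, Lemma~\ref{C1} gives $v^\omega\ge\ell_0^\omega$, and combined with the hypothesis $\ell_0^\omega\ge\cdots\ge\ell_{n+1}^\omega$ this yields $v^\omega\ge\ell_i^\omega$ for every $i\in\{0,\dots,n+1\}$. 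Next, because $w$ is generalized Lyndon, for each proper suffix $s$ of $w$ the rotation of $w$ that begins at $s$ strictly exceeds $w$; comparing the leading segment $v$ of $w$ against the leading segment $s$ of that rotation gives $v\lesssim s$. Taking $s=s_i$ and $s=u$ produces $v\lesssim\ell_i\cdots\ell_n u$ for all $i$ and $v\lesssim u$.

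The crux is to upgrade each $v\lesssim s$ to $v\sim s$. Suppose the length-$\min$ prefixes of $v$ and $s$ first disagree at an index $k$, necessarily with $v$ the smaller. If $k$ falls inside the leading factor of $s$ — that is, $k\le|\ell_i|$ when $s=s_i$, and $k\le|u|\le|\ell_{n+1}|$ when $s=u$, using that $u$ is a prefix of $\ell_{n+1}$ — then $v$ agrees with $\ell_i^\omega$ (resp.\ $\ell_{n+1}^\omega$) before $k$ and is strictly smaller at the index $k$, which lies within the first period. As these comparisons are anchored at position $1$, the discrepancy propagates to the $\omega$-powers and forces $v^\omega<\ell_i^\omega$ (resp.\ $v^\omega<\ell_{n+1}^\omega$), contradicting the first paragraph. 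This already establishes $v\sim u$, and it establishes $v\sim s_i$ whenever $|v|\le|\ell_i|$.

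The main obstacle is the opposite regime, in which $v$ is long enough that a whole block $\ell_i\cdots\ell_{j-1}$ is a proper prefix of $v$ and the first discrepancy with $s_i$ lies in a later block $\ell_j$ (or in the trailing $u$). Writing $v=\ell_i\cdots\ell_{j-1}y$, the residual comparison pits $y$ against $\ell_j$ at the shifted positions $|\ell_i\cdots\ell_{j-1}|+1,\dots$; since the generalized order is position-dependent, the anchored bound $y^\omega\ge\ell_0^\omega\ge\ell_j^\omega$ — valid because $y$ is again a suffix of $\ell_0$ — cannot be fed into this shifted comparison, and the clean argument of the previous paragraph stalls. I expect this position shift to be the technical heart of the proof: the relation $\sim$ is itself shift-insensitive, but the only levers that force it are the order-sensitive inequalities, which are anchored at position $1$ and degrade under peeling.

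To get around this I would argue by downward induction on $i$, starting from the already-handled base $s_{n+1}=u$, peeling the completed leading factors off $v$ one at a time and noting that each residual tail is still a suffix of $\ell_0$, hence retains $\omega$-power at least $\ell_0^\omega$. Granting the inductive hypothesis $v\sim s_{i+1}$, the assertion $v\sim s_i=\ell_i s_{i+1}$ is equivalent to $s_{i+1}\sim\ell_i^\omega$, i.e.\ to the tail being a fractional power of $\ell_i$; Lemma~\ref{J} is the natural instrument for certifying exactly such alignments, fed by the relation $\ell_i^\omega\lesssim v\lesssim s_{i+1}$ extracted from $v^\omega\ge\ell_i^\omega$ together with the Lyndon inequalities. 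The delicate point, which I expect to require the full force of Lemma~\ref{C1} to relate the shifted tail-comparisons back to $\omega$-powers, is to produce the missing one-sided bound so that the hypotheses of Lemma~\ref{J} genuinely close at each stage despite the shifting indices; once $s_{i+1}\sim\ell_i^\omega$ is secured, $v\sim s_{i+1}\sim\ell_i^\omega\sim s_i$ follows since words that are all prefixes of $\ell_i^\omega$ are mutually $\sim$.
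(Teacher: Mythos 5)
Your strategy coincides with the paper's own proof---squeeze each suffix between the position-anchored Lyndon inequalities and the $\omega$-power chain $v^\omega \geq \ell_0^\omega \geq \cdots \geq \ell_{n+1}^\omega$, then close a backward induction with Lemma~\ref{J}---and your first two paragraphs are correct. The genuine gap is exactly the step you flagged as delicate: the relation $\ell_i^\omega \lesssim v \lesssim s_{i+1}$ cannot be fed to Lemma~\ref{J}, whose hypothesis requires the upper bound to have the form $\ell_i^m \cdot (\text{middle word})$; indeed, given your inductive hypothesis $v \sim s_{i+1}$, that relation holds automatically and carries no alignment information about $\ell_i$ whatsoever, so taken literally the step fails. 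The repair needs neither Lemma~\ref{C1} nor any analysis of shifted orders: use the Lyndon inequality against the suffix $s_i$ \emph{itself}, namely $v \lesssim s_i = \ell_i s_{i+1}$, which you already established for every suffix in your first paragraph. Writing $m = \min\{|v|,|s_{i+1}|\}$ and letting $p$ be the common length-$m$ prefix of $v$ and $s_{i+1}$ (this is what the inductive hypothesis gives), the two anchored bounds $\ell_i^\omega \lesssim v$ and $v \lesssim \ell_i s_{i+1}$ become $\ell_i^\omega \lesssim p \lesssim \ell_i p$, and Lemma~\ref{J} yields $p \sim \ell_i^\omega$. This is precisely the paper's chain $\ell_i\cdots\ell_n u \gtrsim v \sim u \sim v \sim v^\omega \geq \ell_0^\omega \geq \ell_i^\omega$ followed by Lemma~\ref{J}.

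A second, smaller gap is in your closing inference: from $v \sim s_{i+1}$ and $s_{i+1} \sim \ell_i^\omega$ you conclude that $v$ is a prefix of $\ell_i^\omega$, but this fails when $|v| > |s_{i+1}|$, since then $s_{i+1}$ is a proper prefix of $v$ and nothing stated controls the remainder of $v$; for the same reason your claimed equivalence between $v \sim s_i$ and $s_{i+1} \sim \ell_i^\omega$ is not an equivalence in either direction. Conclude instead by one more squeeze: $\ell_i p$ is a prefix of $\ell_i^\omega$, so $s_i = \ell_i s_{i+1}$ agrees with $\ell_i^\omega$ up to length $|\ell_i| + m \geq \min\{|v|,|s_i|\}$, and then $\ell_i^\omega \lesssim v \lesssim s_i$ forces $v \sim s_i$. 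The paper sidesteps this bookkeeping by deciding at the outset whether $|u| \leq |v|$ or $|u| > |v|$ and tracking the shorter of the two words, showing at every stage that it is a prefix of $\ell_i^\omega$; with the two repairs above, your version closes in essentially the same way.
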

\begin{proof}
The generalized Lyndon property of $w$ implies $u \gtrsim v$.  The nonincreasing property of the factors implies $v \sim v^\omega \geq \ell_0^\omega \geq \ell_{n+1}^\omega \sim u$. Combining these inequalities, we have $u \sim v$.

Suppose $|u| \leq |v|$.  The generalized Lyndon property of $w$ and the nonincreasing property furthermore implies
$$\ell_n u \gtrsim v \sim u \sim v \sim v^\omega \geq \ell_0^\omega \geq \ell_n^\omega.$$
Hence Lemma \ref{J} implies that $u \sim \ell_n^\omega$, so $\ell_n u \sim u$.  Repeating this process, we can conclude $u \sim v \sim \ell_n u \sim \ell_{n-1}\ell_n u \sim \cdots \sim  \ell_1\cdots \ell_n u$.

Similarly, suppose $|u| > |v|$.  The generalized Lyndon property of $w$ and the nonincreasing property  implies
$$\ell_n v \sim \ell_n u \gtrsim v \sim v^\omega \geq \ell_0^\omega \geq \ell_n^\omega.$$
Hence Lemma \ref{J} implies that $v \sim \ell_n^\omega$, so $v \sim \ell_n v \sim \ell_n u$.  Repeating this process, we can conclude $v \sim u \sim  \ell_n u \sim \ell_{n-1}\ell_n u \sim \cdots \sim \ell_1\cdots \ell_n u$.
\end{proof}

\begin{lemma}\label{Q}
Let $w$ satisfy the hypotheses of Lemma \ref{U}. If $|u| \geq |v|$, then there exists some $m$ with $0\leq m \leq n$ such that
$$
\ell_j = 
\begin{cases}
v & \text{ if } 1\leq j \leq m\\
v^{\alpha_j} \text{ for some } \alpha_j \in (0,1) & \text{ if } m < j \leq n.
\end{cases}
$$ 
\end{lemma}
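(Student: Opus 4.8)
The plan is to feed the hypotheses straight into Lemma \ref{U}, harvest the prefix structure it yields, and then bootstrap that into the claimed fractional-power decomposition. Write $s_i := \ell_i\cdots\ell_n u$ for $1\le i\le n$ and $s_{n+1}:=u$, so $s_i=\ell_i s_{i+1}$. Since $|u|\ge|v|$, Lemma \ref{U} gives $u\sim v\sim s_i$ for every $i$, and as each $s_i$ is at least as long as $v$, this says precisely that $v$ is a prefix of $u$ and of every tail product $s_i$. I would also record, from the nonincreasing hypothesis together with the fact that $v$ is a suffix of the generalized Lyndon word $\ell_0$ (so $v\sim v^\omega\ge\ell_0^\omega$ by Lemma \ref{C1}), that $\ell_i^\omega\le v^\omega$ for all $i$.

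The core is to show each $\ell_j$ is a prefix of $v$; equivalently $|\ell_j|\le|v|$, in which case $\ell_j$ is forced to be the length-$|\ell_j|$ prefix of $v$, i.e.\ $\ell_j=v^{\alpha_j}$ with $\alpha_j=|\ell_j|/|v|\in(0,1]$ and $\alpha_j=1$ exactly when $\ell_j=v$. This easy implication is immediate from the first step: the first $|\ell_j|$ letters of $s_j=\ell_j s_{j+1}$ are $\ell_j$, and they must agree with the first $|\ell_j|\le|v|$ letters of the prefix $v$. The real work is to rule out $|\ell_j|>|v|$. In that case $v$ is a proper prefix of $\ell_j$, say $\ell_j=vq$ with $q$ nonempty, and the generalized Lyndon property of $\ell_j$ gives $vq<qv$, hence $v^\omega<q^\omega$ by Lemma \ref{C1}, while the nonincreasing hypothesis gives $(vq)^\omega\le v^\omega$. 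I expect \emph{this} to be the main obstacle: one wants to derive $(vq)^\omega>v^\omega$ and contradict $(vq)^\omega\le v^\omega$, but this comparison cannot be extracted from the Lyndon inequality $vq<qv$ alone, because the index-dependence of the generalized order prevents a relation at index $1$ from propagating to the shifted comparison between $(vq)^\omega$ and $v^\omega$ sitting at index $|v|+1$ (indeed, over the alternating order one has $(vq)^\omega<v^\omega$ for the bona fide generalized Lyndon word $vq=01$ with $v=0$). The resolution must therefore invoke the \emph{global} generalized Lyndon property of $w$ --- for instance, comparing $w$ against the rotation determined by the suffix $s_j$ (taking $j$ smallest with $|\ell_j|>|v|$, so that $\ell_1,\dots,\ell_{j-1}$ are already known to be prefixes of $v$), or applying Lemma \ref{J} and Lemma \ref{H} to a sandwich assembled from $v\sim s_j$, $v\sim s_{j+1}$, and $\ell_j^\omega\le v^\omega$ --- rather than reasoning locally from $\ell_j$.

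Once every $\ell_j=v^{\alpha_j}$ with $\alpha_j\in(0,1]$, the threshold $m$ comes cheaply from the nonincreasing property. If some factor is a full copy $\ell_j=v$, then $v$ is generalized Lyndon and hence primitive; consequently any proper fractional power $v^{\alpha}$ with $\alpha<1$ that is itself generalized Lyndon is primitive and distinct from $v$, so $(v^{\alpha})^\omega\ne v^\omega$, and combined with $(v^\alpha)^\omega\le v^\omega$ this yields $(v^\alpha)^\omega<v^\omega$. Thus a proper fractional power can never be immediately followed by a full copy without violating $\ell_j^\omega\ge\ell_{j+1}^\omega$, so the full copies form an initial block; setting $m$ to be the largest index with $\ell_m=v$ (and $m=0$ if there is none) gives the statement.
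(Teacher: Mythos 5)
Your reduction and your final step are both sound: Lemma \ref{U} gives that $v$ is a prefix of $u$ and of every tail $s_i$, so any factor with $|\ell_j|\le|v|$ is automatically the length-$|\ell_j|$ prefix $v^{\alpha_j}$ of $v$; and once every factor is known to be such a prefix, your ordering argument (a full copy makes $v$ generalized Lyndon, hence primitive, so a proper prefix satisfies $(v^{\alpha})^\omega<v^\omega$, and the nonincreasing condition then forces the full copies to form an initial block) correctly produces $m$. This is even a cleaner separation of the two conclusions than the paper's, which interleaves them in a single induction. However, the core of the lemma --- ruling out $|\ell_j|>|v|$ --- is precisely what you do not prove. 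You rightly observe that the local Lyndon property of $\ell_j=vq$ cannot suffice (your alternating-order witness $01$ is valid) and that the global Lyndon property of $w$ must enter, but ``compare $w$ with the rotation at $s_j$'' or ``assemble a sandwich for Lemma \ref{J}'' is a plan, not a proof, and this case is where the paper spends essentially all of its effort.

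Concretely, two obstacles block your sketch. First, choosing $j$ minimal with $|\ell_j|>|v|$ does not make the prefix of $w$ a power of $v$: earlier factors may be \emph{proper} prefixes of $v$, so $w$ begins $v\,v^{\alpha_1}\cdots$ and the sandwich $v^\omega\gtrsim s_j\gtrsim v^{j}s_j$ you would need for Lemma \ref{J} is unavailable. The paper treats this configuration separately (its case $m<k$): it compares the prefix $vv^{\alpha_1}$ with the factor $v^{\alpha_1}v$ in both directions --- one inequality from the Lyndon property of $w$, the reverse from $(v^{\alpha_1})^\omega\le v^\omega$ via Lemma \ref{C1} --- obtaining $vv^{\alpha_1}=v^{\alpha_1}v$ and then a primitivity contradiction via Corollary \ref{I}. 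Second, even in the clean configuration where all earlier factors equal $v$ (so Lemma \ref{J} does yield $\ell_j\sim v^\omega$, say $\ell_j=v^rv^\alpha$), the contradiction obtained is not of the form $(vq)^\omega>v^\omega$: one must manufacture an occurrence of $\ell_j v$ as a \emph{factor} of $w$ --- and when $|\ell_{j+1}|<|v|$ no such occurrence is at hand, forcing a further sub-case built around the last short factor $\ell_q$, for which one shows $\ell_q v$ is a factor while $v\ell_q$ is a prefix --- then again compare factor against prefix in both directions and contradict primitivity. None of this two-sided commutation machinery appears in your proposal, so the central claim of the lemma is left unestablished.
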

\begin{proof}
We assume that $\ell_j = v$ for $1\leq j\leq k$ with $0\leq k\leq n-1$ and proceed by induction on $k$. Note that the base case of $k=0$ is automatic. Furthermore, we suppose there exists $m$ with $0\leq m\leq n$ that satisfies the property of Lemma 8 when we restrict to considering $\ell_j$ with $j\leq k$. Note that we can have $m \geq k$. By Lemma \ref{U}, we have $\ell_{k+1} \sim u \sim v$ and $|v| \leq |u|$, hence $\ell_{k+1} \sim v$. Thus, if $|\ell_{k+1}| < |v|$, then we are done.

Suppose $m < k$ and $|\ell_{k+1}| \geq |v|$, so $v$ is a prefix of $\ell_{k+1}$.  Let $\ell_{m+i} = v^{\alpha_{i}}$ for $1 \leq i \leq k-m$, where each $\alpha_{i} \in (0,1)$.  Let $t = k-m$.  Thus
$v\ell_1\cdots\ell_k \ell_{k+1} \sim v^k v^{\alpha_1}\cdots v^{\alpha_{t}} v$.  By Lemma \ref{U}, we have $v \sim u \sim v^{\alpha_2}\cdots v^{\alpha_{k-m}} v$, so $v^{\alpha_1} v$ is a factor of $w$.  Since $vv^{\alpha_1}$ is a prefix of $w$, by the generalized Lyndon property we have $v^{\alpha_1}v \geq vv^{\alpha_1}$.  On the other hand, we have $v^\omega \geq \ell_0^\omega \geq \ell_{m+1}^\omega = (v^{\alpha_1})^\omega$, 
 which implies $v^\omega \geq (v^{\alpha_1} v)^\omega$ by Lemma \ref{C1}.  In particular, we have $vv^{\alpha_1} \geq v^{\alpha_1}v$.  Combining inequalities yields $vv^{\alpha_1} = v^{\alpha_1}v$, implying $v$ is a power of $v^{\alpha_1}$ by Corollary \ref{I}. Thus $$(v^{\alpha_1})^\omega=v^\omega\geq \ell_0^\omega\geq \ell_{m+1}^\omega=(v^{\alpha_1})^\omega,$$ so Corollary \ref{I} and Lemma \ref{C1} imply $\ell_0=v^{\alpha_1}=v$. This contradicts our choice of $m$, hence $|\ell_{k+1}| < |v|$ and $\ell_{k+1} \sim v$, as desired.
 
 In the other case, we need to consider is $|\ell_{k+1}| > |v|$ and $m \geq k$.  Let $\ell_{k+1} = v^r v^\alpha$ for $r \in \N$ and $\alpha \in (0,1)$, noting that $r + \alpha \not\in \N$ since $\ell_{k+1}$ is generalized Lyndon, and hence primitive.  If $k + 1 = n$ or if $|\ell_{k+2}| \geq v$, then $\ell_{k+1}v$ is a factor of $w$.  Note by our inductive hypothesis that $v^rvv^\alpha$ is a prefix of $w$.  By the generalized Lyndon property, we have $v^rv^\alpha v \geq v^r vv^\alpha$.  However, we also have 
 $$v^rv^\alpha v \sim (v^rv^\alpha)^\omega = \ell_{k+1}^\omega \leq \ell_0^\omega \leq v^\omega \sim v^rvv^\alpha.$$
 Combining inequalities yields $v^rv^\alpha v = v^rv v^\alpha$, implying $v$ is a power of $v^{\alpha}$ by Corollary \ref{I}. This means $\ell_{k+1}=v^rv^\alpha$ is a power of $v^\alpha$, contradicting the primitiveness of $\ell_{k+1}$, so we must have $k < n-1$ and $|\ell_{k+2}| < |v|$.  Since we assume $|u| > |v|$, there must exist some $q \in \{k+2,\dots,n\}$ such that $|\ell_q| < n$ and $\ell_q\ell_{q+1}\cdots \ell_n u \sim \ell_q v$. Notably, the largest value of $q$ such that $|\ell_q|<n$ works.  Then $v\ell_q$ is a prefix of $w$, and $\ell_q v$ is a factor of $w$. By the generalized Lyndon property of $w$ and our inductive hypothesis, we have $\ell_q v \geq v\ell_q$.  However, we also have $\ell_q^\omega \leq \ell_0^\omega \leq  v^\omega$, hence $(\ell_qv)^\omega \leq v^\omega$ by Lemma $\ref{C1}$.  In particular, $\ell_qv \lesssim vv \sim v\ell_q$.  Again, we combine inequalities and use Corollary \ref{I} and Lemma \ref{C1} to conclude $\ell_q = v$, our final contradiction. 
\end{proof}

\begin{lemma}\label{S}
If $w$ satisfies the hypotheses of Lemma \ref{U}, then $\ell_1 = \cdots = \ell_n = v$.
\end{lemma}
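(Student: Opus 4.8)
The plan is to separate the two cases $|u| \geq |v|$ and $|u| < |v|$, and in each to show that none of the factors $\ell_i$ can be a proper fractional power of $v$ or strictly longer than $v$, which forces $\ell_i = v$.

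Suppose first that $|u| \geq |v|$. Then Lemma \ref{Q} supplies an index $m$ with $\ell_j = v$ for $j \leq m$ and $\ell_j = v^{\alpha_j}$, $\alpha_j \in (0,1)$, for $m < j \leq n$, so it remains only to rule out $m < n$. Assuming $m < n$, I would look at the first fractional factor $\ell_{m+1} = v^{\alpha}$. Since $w$ begins with $v^{m+1}$, the word $vv^{\alpha}$ is a prefix of $w$; and since $|u| \geq |v|$, Lemma \ref{U} guarantees that $\ell_{m+2}\cdots \ell_n u \sim v$ has length at least $|v|$, so the letters immediately following $\ell_{m+1}$ reconstitute a copy of $v$, making $v^{\alpha}v$ a factor of $w$. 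The generalized Lyndon property of $w$ then gives $v^{\alpha}v \geq vv^{\alpha}$, while the nonincreasing hypothesis $v^\omega \geq \ell_0^\omega \geq (v^{\alpha})^\omega$ together with Lemma \ref{C1} gives $vv^{\alpha} \geq v^{\alpha}v$. Hence $vv^{\alpha} = v^{\alpha}v$, and Corollary \ref{I} makes $v$ a power of $v^{\alpha}$. If $m \geq 1$ this is immediately absurd, since $\ell_1 = v$ is then generalized Lyndon and hence primitive; if $m = 0$ one instead deduces $\ell_0^\omega = v^\omega = (v^{\alpha})^\omega$ and, arguing as in the proof of Theorem \ref{O} that two generalized Lyndon words with equal $\omega$-powers coincide, gets $\ell_0 = v^{\alpha}$, contradicting that $v$ (a suffix of $\ell_0$) is strictly longer than $\ell_0$. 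Either way $m = n$, so all $\ell_i = v$.

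The case $|u| < |v|$ is where Lemma \ref{Q} is unavailable, and I would handle it by a downward induction on $i$, showing $\ell_n = \ell_{n-1} = \cdots = \ell_1 = v$ one factor at a time. The inductive step is the easy part: once $\ell_{i+1} = \cdots = \ell_n = v$ is known, we have $\ell_{i+1}\cdots\ell_n u = v^{\,n-i}u$, whose length exceeds $|v|$, so exactly as in the previous paragraph a full copy of $v$ can be reconstituted after $\ell_i$ and the same prefix-versus-factor sandwich (generalized Lyndon property against nonincreasing plus Lemma \ref{C1}, then Corollary \ref{I}) forces $\ell_i = v$; here one also uses the alignment consequences of Lemma \ref{U}, namely that $\ell_i$ is either a prefix of $v$ or begins with $v$, to reduce to the commuting contradiction. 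I expect the main obstacle to be the base case $\ell_n = v$: because $u$ is now strictly shorter than $v$, the material following $\ell_n$ in $w$ is just $u$ and cannot by itself reconstitute a full copy of $v$, so the clean sandwich from the inductive step is not directly available. Handling this base case should require genuinely invoking the remaining hypotheses on the right end, that $u$ is a prefix of a generalized Lyndon word $\ell_{n+1}$ with $\ell_{n+1}^\omega \leq \ell_n^\omega \leq v^\omega$, and care will be needed because the generalized order depends on the index of comparison, so the junction between $\ell_n$ and $u$ cannot be analyzed by a position-blind lexicographic argument.
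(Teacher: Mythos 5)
Your case $|u| \geq |v|$ is, in essence, the paper's entire proof of this lemma, and it is correct: the paper likewise invokes Lemma~\ref{Q} to reduce to an offending fractional factor $v^\alpha$ (it works with $\ell_n$ where you work with $\ell_{m+1}$, which is immaterial), then plays the generalized Lyndon property of $w$ (giving $vv^\alpha \leq v^\alpha v$ from the prefix $vv^\alpha$ versus the factor $v^\alpha v$) against the nonincreasing hypothesis (giving the reverse inequality; the paper routes this through Lemma~\ref{J} where you apply Lemma~\ref{C1} directly, both fine), and finishes with Corollary~\ref{I} and the same style of endgame pinning down $\ell_0$. The case $|u| < |v|$ that you could not finish is not in the paper's proof either: the paper's argument explicitly operates under ``our assumption $|u| \geq |v|$,'' i.e., Lemma~\ref{S} is intended to carry the standing hypothesis $|u| \geq |v|$ of Lemma~\ref{Q}, even though its wording (``the hypotheses of Lemma~\ref{U}'') fails to say so. That this is the intended reading is confirmed by how the lemma is used downstream: Corollary~\ref{T} concludes $|u| < |v|$ precisely by showing that the applicability of Lemma~\ref{S} leads to a contradiction --- if Lemma~\ref{S} held without the length hypothesis, Corollary~\ref{T} would instead assert that no such $w$ exists at all --- and Theorem~\ref{R} invokes Lemma~\ref{S} only inside its $|u_k| \geq |v_k|$ branch. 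So your proposal proves exactly what the paper proves, by essentially the same argument; the obstacle you identified in the base case $\ell_n = v$ when $|u| < |v|$ (no second copy of $v$ available after $\ell_n$ to run the sandwich) is real, but it reflects an imprecision in the paper's statement of the lemma rather than a gap in your argument.
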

\begin{proof}
It is enough to show that $\ell_n = v$, since $v^\omega \geq \ell_0^\omega \geq \ell_i^\omega \geq \ell_n^\omega= v^\omega$.  If $\ell_n \not= v$, then by Lemma \ref{Q}, we have $\ell_n = v^\alpha$ for some $\alpha \in (0,1)$.  Moreover, by Lemma \ref{U} and our assumption $|u| \geq |v|$, we have $\ell_1\dots\ell_nu \sim v$, hence $w = v\ell_1\dots\ell_nu \sim vv$.  Thus, $v^\alpha v$ is a factor of $w$ and $vv^\alpha$ is a prefix, so the generalized Lyndon property of $w$ implies $vv^\alpha \leq v^\alpha v$.  Since we have $(v^\alpha)^\omega = \ell_n^\omega \leq \ell_0^\omega \leq v^\omega \sim v \lesssim v^\alpha v,$
Lemma \ref{J} implies $v \sim (v^\alpha)^\omega$.  In particular, $v^\alpha v \sim (v^\alpha)^\omega  \leq v^\omega \sim v v^\alpha$.  Combining inequalities, we have $vv^\alpha \leq v^\alpha v$.  This implies $v$ is a power of $v^\alpha$ by Corollary \ref{I}.  Thus $(v^\alpha)^\omega = v^\omega \geq \ell_0^\omega \geq \ell_n^\omega = (v^\alpha)^\omega$. In particular $\ell_0^\omega = v^\omega$, which implies $v = \ell_0$ by the generalized Lyndon property of $\ell_0$.  Therefore $v = \ell_0 = v^\alpha$, contradicting our choice of $\alpha$.
\end{proof}

\begin{corollary}\label{T}
Let $w$ be as in the statement of Lemma \ref{S}.  If we additionally assume that $v$ is a proper suffix of $\ell_0$ and $n \geq 1$, then $|u| < |v|$.
\end{corollary}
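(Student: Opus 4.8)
The plan is to argue by contradiction: I would assume $|u| \geq |v|$ and derive a conflict with the nonincreasing hypothesis, thereby forcing $|u| < |v|$. The point of the assumption $|u| \geq |v|$ is that it lets me invoke the preceding result. Indeed, under this assumption $w$ meets all the hypotheses of Lemma \ref{S}, so I may conclude $\ell_1 = \cdots = \ell_n = v$. Since I am additionally assuming $n \geq 1$, this forces in particular $\ell_1 = v$, and hence $\ell_1^\omega = v^\omega$.

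The crux is then to compare $v^\omega$ with $\ell_0^\omega$, exploiting the remaining hypothesis that $v$ is a \emph{proper} suffix of $\ell_0$. Writing $\ell_0 = pv$ with $p$ nonempty, the generalized Lyndon property of $\ell_0$ applied to this nontrivial factorization gives $pv < vp$, and the primitivity of $\ell_0$ (generalized Lyndon words are primitive, cf. Corollary \ref{I}) rules out $pv = vp$, so the inequality is strict and $(pv)^\omega < (vp)^\omega$. Lemma \ref{C1} then upgrades this to $\ell_0^\omega = (pv)^\omega < v^\omega$; that is, a proper suffix of a generalized Lyndon word has strictly larger $\omega$-power. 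This is precisely the kind of consequence of Lemma \ref{C1} already exploited in the proof of Theorem \ref{G}.

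Combining the two steps yields $v^\omega > \ell_0^\omega$, which directly contradicts the nonincreasing chain $\ell_0^\omega \geq \ell_1^\omega = v^\omega$ supplied by the hypotheses of Lemma \ref{U}. Hence the assumption $|u| \geq |v|$ is untenable, and $|u| < |v|$ as claimed. I expect the only delicate point to be securing the \emph{strictness} of $v^\omega > \ell_0^\omega$: it is essential that $v$ be a proper (rather than improper) suffix and that $\ell_0$ be primitive, which is exactly where the two extra hypotheses "$v$ a proper suffix of $\ell_0$" and $n \geq 1$ enter the argument; the remainder is a short bookkeeping invocation of Lemmas \ref{S} and \ref{C1}.
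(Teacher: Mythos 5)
Your proof is correct and follows essentially the same route as the paper: invoke Lemma \ref{S} to get $\ell_1 = \cdots = \ell_n = v$, observe that $v$ being a proper suffix of the generalized Lyndon word $\ell_0$ forces $v^\omega > \ell_0^\omega$, and contradict the chain $\ell_0^\omega \geq \ell_1^\omega = v^\omega$. The only differences are cosmetic and in your favor: you make explicit the contradiction hypothesis $|u| \geq |v|$ (which is needed for Lemma \ref{S}, whose proof tacitly carries the assumption $|u| \geq |v|$ from Lemma \ref{Q}), and you justify the step $v^\omega > \ell_0^\omega$ via the rotation inequality $pv < vp$ and Lemma \ref{C1}, where the paper simply asserts it.
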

\begin{proof}
    By Lemma \ref{S}, we have $\ell_1 = \ell_2 = \cdots = \ell_n = v$.  Since $v$ is a proper suffix of $\ell_0$, we have $v^\omega > \ell_0^\omega$.  Thus $v^\omega > \ell_0^\omega \geq \ell_1^\omega = v^\omega$, which is a contradiction.
\end{proof}

\begin{theorem}\label{R}
 An aperiodic infinite word with no generalized Lyndon suffix has a unique nonincreasing factorization into finite generalized Lyndon words.
\end{theorem}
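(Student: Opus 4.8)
The plan is to assume $w$ has two distinct nonincreasing factorizations into finite generalized Lyndon words and extract a contradiction from the first place they disagree. Since $w$ has no generalized Lyndon suffix, Theorems \ref{G} and \ref{L} guarantee that every such factorization has infinitely many terms, all finite: a finite product of finite words cannot equal the infinite word $w$, and an infinite final term would be a generalized Lyndon suffix. So write $w = \ell_1\ell_2\cdots = h_1h_2\cdots$ with all factors finite generalized Lyndon and both products nonincreasing, and let $k$ be the least index with $\ell_k \neq h_k$. Because $\ell_1\cdots\ell_{k-1} = h_1\cdots h_{k-1}$, the factors $\ell_k$ and $h_k$ begin at the same position, so without loss of generality $|\ell_k| < |h_k|$ and $\ell_k$ is a proper prefix of $h_k$. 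I would then cut $h_k$ along the boundaries of the $\ell$-factorization, writing $h_k = \ell_k\ell_{k+1}\cdots\ell_{k+p}\,u$ where $p \geq 0$ and $u$ is the (empty, or otherwise proper) prefix of $\ell_{k+p+1}$ into which $h_k$ extends. Taking $v := \ell_k$ as a full suffix of $\ell_0 := \ell_k$, the word $h_k$ meets the hypotheses of Lemma \ref{U}, the nonincreasing chain $\ell_k^\omega \geq \cdots \geq \ell_{k+p+1}^\omega$ being inherited from $w$. The argument then splits on the length of $u$.

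In the overlap case $|u| \geq |\ell_k|$, Lemma \ref{S} collapses the interior, giving $\ell_{k+1} = \cdots = \ell_{k+p} = \ell_k$, so $h_k = \ell_k^{\,p+1}u$ with $u$ a nonempty proper prefix of $\ell_{k+p+1}$. Rotating $h_k$ by $|\ell_k|$ and using that $h_k$ is generalized Lyndon, hence primitive, yields $\ell_k u < u\ell_k$, which by Lemma \ref{C1} amounts to $\ell_k^\omega < u^\omega$; if instead $\ell_k u = u\ell_k$, then Corollary \ref{I} already contradicts primitivity of $h_k$. On the other hand, since $u$ is a proper prefix of the generalized Lyndon word $\ell_{k+p+1}$, a short computation with Lemma \ref{C1} gives $u^\omega < \ell_{k+p+1}^\omega \leq \ell_k^\omega$, contradicting $\ell_k^\omega < u^\omega$.

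In the remaining case $|u| < |\ell_k|$ I would argue directly. If $u$ is empty, then $h_k = \ell_k\cdots\ell_{k+p}$ with $p \geq 1$ is a nonincreasing product of at least two generalized Lyndon words that is itself generalized Lyndon, contradicting the uniqueness of finite factorizations in Theorem \ref{C3}. If $u$ is nonempty, then Lemma \ref{U} gives $u \sim \ell_k$, so $u$ is a proper prefix of $\ell_k$ and hence of $h_k$; but $u$ is also a proper suffix of $h_k$, so the generalized Lyndon property forces $h_k < u$ while the prefix relation forces $u < h_k$, a contradiction. In every case the assumption $\ell_k \neq h_k$ is untenable, so the two factorizations coincide term by term.

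I expect the main obstacle to be the overlap case $|u| \geq |\ell_k|$, where $h_k$ genuinely straddles several $\ell$-factors and pinning down its shape requires the full rigidity package of Lemmas \ref{U}, \ref{Q}, and \ref{S} (with Corollary \ref{T} providing the complementary proper-suffix bookkeeping that a spanning-factor approach would instead lean on); once $h_k = \ell_k^{\,p+1}u$ is established, the $\omega$-power comparison via Lemma \ref{C1} closes it. The case $|u| < |\ell_k|$ and the reduction to the first disagreement are comparatively routine. Finally, the aperiodicity hypothesis serves only to separate this theorem from the eventually periodic situation already handled in Theorem \ref{O}; the combinatorial core above never invokes it directly.
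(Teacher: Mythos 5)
Your reduction to the first disagreement, the decomposition $h_k=\ell_k\ell_{k+1}\cdots\ell_{k+p}u$, the application of Lemmas \ref{U} and \ref{S} to get $h_k=\ell_k^{p+1}u$ in the overlap case, and the empty-$u$ subcase (contradiction with Theorem \ref{C3}) are all fine. But both of the contradictions that are supposed to finish the proof rest on facts that hold for the ordinary lexicographic order and are \emph{false} for generalized ones. First, in the overlap case you claim that a proper prefix $u$ of the generalized Lyndon word $\ell_{k+p+1}$ must satisfy $u^\omega<\ell_{k+p+1}^\omega$. Counterexample under the alternating order: $\ell=010$ is Galois (the paper itself notes that the words $(01)^k0$ are Galois), yet its proper prefix $u=0$ satisfies $u^\omega=000\cdots>(010)^\omega$, because the first disagreement occurs at position $2$, where the order is reversed. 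What Lemma \ref{C1} actually gives you from $uz<zu$ (writing $\ell=uz$) is $u^\omega<z^\omega$, which does not order $u^\omega$ against $(uz)^\omega$. So the chain $u^\omega<\ell_{k+p+1}^\omega\leq\ell_k^\omega$ is unavailable and no contradiction results. Second, in the case $0<|u|<|\ell_k|$ you claim that $u$ being simultaneously a proper prefix and a proper suffix of $h_k$ violates the generalized Lyndon property. It does not: generalized Lyndon words can be bordered (the same Galois word $010$ has border $0$), and the definition only compares $h_k$ with its rotations, never directly with its suffixes; since $u$ is a prefix of $h_k$ the order forces $u<h_k$, and the rotation inequality $h_k<ux$ (where $h_k=xu$) is perfectly compatible with that. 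A smaller slip of the same kind: ``rotating by $|\ell_k|$'' does not yield $\ell_k u<u\ell_k$, because after the common prefix $\ell_k^p$ the comparison takes place in the shifted orders $<_{p|\ell_k|+n}$; the conclusion $\ell_k^\omega<u^\omega$ is salvageable by instead comparing $h_k$ with the rotation $u\ell_k^{p+1}$ and applying Lemma \ref{C1}.

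The deeper problem is strategic: the configuration at the first disagreement is genuinely consistent (a factor of one factorization can straddle several factors of the other and end in a prefix $u$ of the next one with $u^\omega>\ell_k^\omega$), so no local contradiction of the kind you seek exists; this is exactly the position-dependence phenomenon that your two false steps ignore. That is why the paper's proof does not stop at the first crossing. It builds the full zigzag, writing $w_{i_k}=v_k\ell_{j_k+1}\cdots\ell_{j_{k+1}-1}u_k$ with $u_k$ a proper prefix of $\ell_{j_{k+1}}$, then $\ell_{j_{k+1}}=u_kw_{i_k+1}\cdots w_{i_{k+1}-1}v_{k+1}$ with $v_{k+1}$ a proper prefix of $w_{i_{k+1}}$, and so on, alternating between the two factorizations forever; Lemmas \ref{U}, \ref{Q}, \ref{S} and Corollary \ref{T} are then used to prove the strict decrease $|v_{k+1}|<|v_k|$, and the contradiction is the resulting infinite descent of positive integer lengths, not anything visible at depth one. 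Your argument is essentially the first step of that zigzag together with an invalid attempt to close it immediately; the descent through infinitely many crossings is the missing idea.
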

\begin{proof}
Suppose $w$ is an aperiodic word with no generalized Lyndon suffix such that $w$ has two distinct nonincreasing factorizations into generalized Lyndon words.  Note that each factor in both factorizations must be finite.  We can remove any initial common factors, so without loss of generality $w = \prod_{i=0}^\infty w_i = \prod_{j=0}^\infty \ell_j$ where $w_i^\omega \geq w_{i+1}^\omega$ for all $i \in \Z_{\geq 0}$, $\ell_j^\omega \geq \ell_{j+1}^\omega$ for all $j \in \Z_{\geq 0}$, and $|w_0| >  |\ell_0|$. Since we know finite words have unique nonincreasing generalized Lyndon factorizations from Theorem \ref{C3}, we have $\prod_{i=0}^x w_i \not= \prod_{j=0}^y \ell_j $ for any $x,y \in \Z_{\geq 0}$.  
\vspace{-0.3in}
\begin{figure}[h]
\centering
\includegraphics[width = 4.5 in]{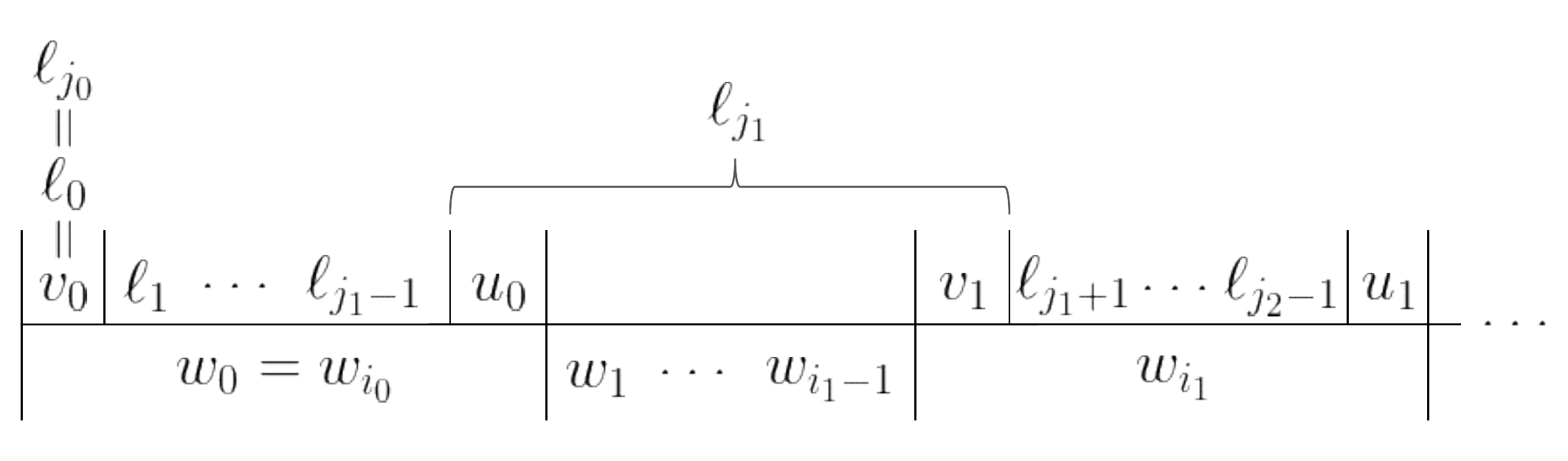}
\caption[Figure 1.]{The construction of $v_k$, $\ell_{j_k}$, $w_{i_k}$, and $u_k$}
\label{fig1}
\end{figure}
\vspace{-.2in}

Define $v_0 = \ell_0$, $\ell_{j_0} = \ell_0$, and $w_{i_0} = w_0$. We define $j_{k+1}$ to be the unique integer such that $w_{i_k}$ can be written as $v_k \ell_{j_k + 1} \cdots \ell_{j_{k+1} - 1} u_k$, where $u_k$ is a prefix of $\ell_{j_{k+1}}$.  We define $i_{k+1}$ to be the unique integer such that $\ell_{j_{k+1}}$  can be written as $u_k w_{j_k + 1}\cdots w_{j_{k+1}-1} v_{k+1}$, where $v_{k+1}$ is a prefix of $w_{j_{k+1}}$.  This construction is illustrated in Figure \ref{fig1}.  Observe that for each $k \in \Z_{\geq 0}$ we have that $v_k$ is a proper prefix of $w_{i_k}$, $u_k$ is a proper suffix of $w_{i_k}$, $u_k$ is a proper prefix of $\ell_{j_{k+1}}$, and $v_{k+1}$ is a proper suffix of $\ell_{j_{k+1}}$.

We aim to show that $|v_{k+1}| < |v_k|$ for each $k \in \Z_{\geq 0}$, and since this reduction can only be applied finitely many times, we will reach a contradiction.  Assume not, that $|v_{k+1}| \geq |v_k|$ for a certain $k \in \Z_{\geq 0}$.  
 
 First suppose that $|u_k| \geq |v_k|$, and note that we cannot have $u_k$ be a power of $v_k$  or $u_k = v_k$, or else $w_{i_k}$ is not primitive by Lemma \ref{S}. Thus Lemmas \ref{S} and \ref{U} imply that $u_k = v_k^r v_k^\alpha$ for some $r \in \N$ and $\alpha \in (0,1)$.   Moreover, Corollary \ref{T} implies that $w_{i_k} = v_k u_k$.   Lemma \ref{U} implies that $v_k \sim u_k$ and $u_k \sim v_{k+1}$, hence $|v_k|\leq |u_k|, |v_{k+1}|$ implies $v_k \sim v_{k+1}$. Furthermore, Lemma \ref{U} yields  $v_k \sim v_{k+1} \sim w_{i_{k}+1}\cdots w_{i_{k+1}-1}v_{k+1}$, so $u_kw_{i_{k}+1}\cdots w_{i_{k+1}-1}v_{k+1} \sim v_k^rv_k^\alpha v_k$.  Thus
  $$v_k^rv_kv_k^\alpha \sim v_k^\omega \geq \ell_{j_k}^\omega = (u_kw_{i_k+1} \cdots w_{i_{k+1}-1}v_{k+1})^\omega \sim v_k^rv_k^\alpha v_k.$$
  However, by the generalized Lyndon property of $w_{i_k}$ and Lemma \ref{Q}, we also have
$$v_k^rv_k^\alpha v_k \sim (v_k^r v_k^\alpha)^\omega =  u_k^\omega \geq (v_k\ell_{j_k + 1}\dots\ell_{j_{k+1} -1})^\omega = v_k^\omega \sim v_k^r v_k v_k^\alpha.$$
Combining inequalities yields $v_k^rv_k^\alpha v_k = v_k^r v_k v_k^\alpha$, which implies that $v_k$ and $v_k^\alpha$ are powers of a common word by Lemma \ref{C2}.  Thus $w_{i_k}$ is not primitive, contradicting that it is generalized Lyndon.  So in this case we have $|v_{k+1}| < |v_k|$.
  
Now suppose $|u_k| < |v_k|$.  By the generalized Lyndon property of $w_k$, we have $v_k \lesssim u_k$.  The nonincreasing property of our factors implies $u_k \sim \ell_{j_{k+1}}^\omega \leq \ell_{j_k}^\omega \leq v_k^\omega \sim v_k.$
Hence $v_k \sim u_k$, so $u_k = v_k^\alpha$ for some $\alpha \in (0,1)$.  Since $|u_k| < |v_k| \leq |v_{k+1}|$, Corollary \ref{T} applies to $\ell_{j_k}$.  In particular, $\ell_{j_{k+1}} = u_kv_{k+1}$.  
By the generalized Lyndon property of $w_{i_k}$ and $\ell_{j_{k+1}}$ along with Lemma \ref{C1}, we have
$$(v_k^\alpha)^\omega = u_k^\omega \geq w_{i_k}^\omega \geq w_{i_{k+1}}^\omega \sim v_{k+1} \sim v_{k+1}^\omega \geq u_k^\omega = (v_k^\alpha)^\omega.$$
Hence $v_{k+1} \sim (v_k^\alpha)^\omega$.  Note that $v_k \sim w_{i_k}^\omega$, so we also have $v_k \sim v_{k+1} \sim (v_k^\alpha)^\omega$.  On the one hand, we have
$$v_k^\alpha v_k \sim u_kv_{k+1} = \ell_{j_{k+1}} \sim \ell_{j_{k+1}}^\omega \leq \ell_{j_k}^\omega \leq v_k^\omega \sim v_kv_k^\alpha.$$
However, the generalized Lyndon property of $w_1$ and Lemma \ref{U} imply
$$v_k^\alpha v_k = u_kv_k \sim u_kv_k \ell_{j_k + 1}\cdots \ell_{j_{k+1} - 1} \gtrsim v_k \ell_{j_k + 1}\cdots \ell_{j_{k+1} - 1} u_k \sim v_k u_k = v_k^\alpha v_k.$$
Therefore $v_k^\alpha v_k =  v_kv_k^\alpha$, which implies $v_k$ and $v_k^\alpha$ are powers of a common word.  However, we reach our final contradiction by noting that $\ell_{j_{k+1}}$ is not primitive, contradicting that it is generalized Lyndon.
\end{proof}

\begin{theorem}
Every infinite word has a unique factorization into a nonincreasing product of generalized Lyndon words.
\end{theorem}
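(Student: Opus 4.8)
The plan is to assemble the four cornerstone results of the paper---Theorems \ref{L}, \ref{N}, \ref{O}, and \ref{R}---into the single omnibus statement via a short case analysis on the infinite word $w$. Since every suffix of an infinite word is itself infinite, the phrase ``generalized Lyndon suffix'' unambiguously refers to an \emph{infinite} generalized Lyndon suffix, so the governing dichotomy is simply whether or not $w$ admits such a suffix. No new combinatorial content is required; the task is to verify that the previously established pieces cover every infinite word exactly once.

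First I would dispatch the case where $w$ has an infinite generalized Lyndon suffix. Here Theorem \ref{L} directly supplies both existence and uniqueness of the nonincreasing factorization (indeed it shows the factorization is finite and identifies its last term), so nothing further is needed. In the complementary case, $w$ has no infinite generalized Lyndon suffix, and existence of a nonincreasing generalized Lyndon factorization is immediate from Theorem \ref{N}, which was proven with no suffix hypothesis whatsoever. For uniqueness in this complementary case I would split on periodicity: if $w$ is eventually periodic, Theorem \ref{O} gives uniqueness, and if $w$ is aperiodic, Theorem \ref{R} applies, its hypothesis ``no generalized Lyndon suffix'' being precisely the standing assumption of this case. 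Since every infinite word is either eventually periodic or aperiodic, these two subcases are exhaustive.

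The only remaining work is bookkeeping: checking that the three scenarios---$w$ has an infinite generalized Lyndon suffix; $w$ has no such suffix and is eventually periodic; $w$ has no such suffix and is aperiodic---are jointly exhaustive and that the hypotheses of each invoked theorem are met. The one point deserving a moment's care is that Theorem \ref{O} already internally folds in the suffix case by deferring to Theorem \ref{L}, so I must present the dichotomy so that no word is handled by two incompatible uniqueness arguments; organizing the case split first on the existence of a generalized Lyndon suffix and only then on periodicity avoids any such overlap. Because all of the substantive machinery was developed in Sections \ref{3}, \ref{4}, and \ref{5}, I do not expect any genuine obstacle here---this theorem is the capstone that simply records that the existence result of Section \ref{4} and the uniqueness results of Section \ref{5}, together with the generalized-Lyndon-suffix result of Section \ref{3}, partition and settle the full space of infinite words.
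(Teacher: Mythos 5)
Your proposal is correct and follows essentially the same route as the paper, whose proof is a one-line assembly of Theorems \ref{L}, \ref{O}, and \ref{R}; your case split (generalized Lyndon suffix via Theorem \ref{L}; otherwise eventually periodic via Theorem \ref{O} or aperiodic via Theorem \ref{R}, with existence from Theorem \ref{N}) is just a slightly more explicit bookkeeping of that same argument.
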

\begin{proof}
This follows directly from Theorems \ref{L}, \ref{O}, and \ref{R}.
\end{proof}

\section{Characterization of Infinite Generalized Lyndon Words}\label{6}
Siromoney et al. showed in \cite{SMDS} that the infinite Lyndon words are precisely the limits of prefix-preserving increasing sequences of finite Lyndon words.  We show that this result still holds when Lyndon words are replaced with generalized Lyndon words provided that the infinite word is primitive.  

\begin{theorem}
A primitive infinite word is generalized Lyndon if and only if it has infinitely many generalized Lyndon prefixes.
\end{theorem}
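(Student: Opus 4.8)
The plan is to prove both directions of the biconditional, leveraging the machinery already developed. The forward direction is essentially immediate from the contrapositive of Lemma \ref{A}: if a primitive infinite word $w$ is generalized Lyndon but had only finitely many generalized Lyndon prefixes, this would contradict nothing directly, so instead I read Lemma \ref{A} as stated—``a primitive word with infinitely many generalized Lyndon prefixes is generalized Lyndon''—which gives one direction verbatim. Thus the real content lies in the converse: assuming $w$ is a primitive infinite generalized Lyndon word, I must produce infinitely many generalized Lyndon prefixes.

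\medskip

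For the converse, my strategy is to argue by contradiction and invoke the factorization theory. Suppose $w$ is a primitive infinite generalized Lyndon word but has only finitely many generalized Lyndon prefixes. Since $w$ is itself generalized Lyndon, it has the trivial one-term factorization $w = w$ by the convention $u^\omega = u$ for infinite $u$, and by Theorems \ref{L}, \ref{O}, and \ref{R} this factorization is unique. The idea is to build a \emph{second} factorization and derive a contradiction with uniqueness, or alternatively to show directly that finitely many generalized Lyndon prefixes forces $w$ to admit an infinite nonincreasing factorization of finite terms, again contradicting uniqueness. Concretely, I would follow the construction in the proof of Theorem \ref{N}: if $w$ has finitely many generalized Lyndon prefixes, then some prefix $\ell_1$ appears in the Chen--Fox--Lyndon-type factorizations of $w[1,n]$ for infinitely many $n$, and one can inductively extract an infinite nonincreasing factorization $\ell_1, \ell_2, \dots$ of $w$ into \emph{finite} generalized Lyndon words. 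But Theorem \ref{G} shows that any word expressible as a nonincreasing product of finite generalized Lyndon words has no generalized Lyndon suffix—in particular, $w$ itself (which is one of its own suffixes) cannot be generalized Lyndon, contradicting our hypothesis.

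\medskip

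The main obstacle is ensuring the construction from Theorem \ref{N} genuinely produces infinitely many \emph{finite} terms rather than terminating in an $\omega$-power. In the eventually periodic case of Theorem \ref{N}, the process can halt by yielding a final term $\ell^\omega$; I must rule this out here. This is exactly where primitivity and the generalized Lyndon hypothesis on $w$ are used: if the construction terminated with a factor $\ell^\omega$ for a finite generalized Lyndon $\ell$, then $w$ would be eventually periodic, and since $w$ is primitive and infinite generalized Lyndon, $w = \ell^\omega$ would force $w$ to equal a power of a finite word, contradicting primitivity unless $w = \ell$ is finite. Thus under the standing assumptions the construction cannot terminate, and we obtain a genuine infinite factorization into finite terms, completing the contradiction via Theorem \ref{G}. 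The primitivity hypothesis is therefore essential—it is precisely what excludes the degenerate periodic case where an infinite word could be generalized Lyndon yet factor as a single $\omega$-power.
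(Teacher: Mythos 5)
Your overall skeleton matches the paper's: Lemma \ref{A} disposes of the reverse direction, and for the forward direction you run the construction of Theorem \ref{N} on a hypothetical generalized Lyndon word $w$ with finitely many generalized Lyndon prefixes, aiming for a contradiction with the factorization theory. However, there is a genuine gap in the inductive step. The construction of Theorem \ref{N} is a pigeonhole argument: to extract $\ell_{k+1}$ one needs the current remainder $w_k$ to have only \emph{finitely many} generalized Lyndon prefixes, so that a single one of them begins the factorizations of infinitely many prefixes of $w$. In Theorem \ref{N} this is precisely where the hypothesis that $w$ has no generalized Lyndon suffix is used: it forces each $w_k$ to be non-generalized-Lyndon, whence Lemmas \ref{A} and \ref{M} control its generalized Lyndon prefixes. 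In the present theorem that hypothesis fails by design: $w$ is generalized Lyndon, and its suffixes $w_k$ may themselves be generalized Lyndon (in the classical order, $w = 01\,011\,0111\cdots$ is infinite Lyndon and so is its suffix $011\,0111\cdots$), in which case $w_k$ can have infinitely many generalized Lyndon prefixes and the pigeonhole can fail: every generalized Lyndon prefix of $w_k$ may occur as the $(k+1)$-st factor in only finitely many prefix factorizations. Your proposal never addresses this case, and it is exactly where the paper's proof does its real work: if no admissible $\ell_{n+1}$ exists, then infinitely many distinct prefixes $p$ of $w_n$ are such that $\ell_1,\dots,\ell_n,p$ is itself the full factorization of a prefix of $w$; hence $p^\omega \leq \ell_n^\omega$ for prefixes $p$ of unbounded length, and passing to the limit gives $w_n \leq \ell_n^\omega$. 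Then $\ell_1,\dots,\ell_n,w_n$ is a nontrivial nonincreasing generalized Lyndon factorization of $w$, contradicting Theorem \ref{L}, whose uniqueness statement forces the factorization of the generalized Lyndon word $w$ to be the one-term factorization $w$ itself. Without this limit argument the induction cannot be carried out.

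Separately, your treatment of the termination case is incorrect. If the process halts because $w_k$ is a power of a finite generalized Lyndon word $\ell$, what results is $w = \ell_1\cdots\ell_k\,\ell\,\ell\,\ell\cdots$, not $w = \ell^\omega$; and eventual periodicity of $w$ is perfectly compatible with primitivity (primitivity only excludes $w$ being a power; in fact every infinite generalized Lyndon word is automatically primitive, since a power $u^\omega$ equals one of its proper suffixes and so is never strictly smallest among them). So ``contradicting primitivity'' is not a valid step there. This particular case is harmless for your plan, since the halted output is still an infinite nonincreasing product of finite generalized Lyndon words and your appeal to Theorem \ref{G} would finish it; but the reasoning as written misidentifies both what the construction produces and what primitivity forbids. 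For the same reason, your closing remark is backwards: primitivity is needed only for the reverse direction (Lemma \ref{A}); in the forward direction it comes for free from the generalized Lyndon hypothesis.
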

\begin{proof}
Lemma \ref{A} handles the reverse direction. Suppose that there exists an infinite generalized Lyndon word $w$ with finitely many generalized Lyndon prefixes.  Since $w$ has infinitely many prefixes, one of its generalized Lyndon prefixes must appear in the unique nonincreasing generalized Lyndon factorizations (from Theorem \ref{C3}) of infinitely many of the prefixes of $w$.  

We will now use the method presented in the proof of Theorem \ref{N} to construct a nontrivial factorization of $w$, contradicting the result of Theorem \ref{L}.   Suppose that $w = \ell_1 \dots \ell_n w_n$ where $\ell_j$ is a finite generalized Lyndon word and $\ell_1^\omega \geq \ell_2^\omega\geq \ell_{n}^\omega>w_n$. Further suppose that $w$ has infinitely many prefixes with factorizations beginning with $\ell_1, \dots, \ell_n$.  If $w_n$ is not generalized Lyndon, the process proceeds as in Theorem \ref{N}.  

Suppose $w_n$ is generalized Lyndon.  If we can choose a generalized Lyndon prefix $\ell_{n+1}$ of $w_n$ such that infinitely many prefixes of $w$ have factorizations beginning with $\ell_1, \dots, \ell_n, \ell_{n+1}$, then the process can continue.  Otherwise, there must be infinitely many prefixes $p$ of $w_n$ such that $\ell_1, \dots, \ell_n, p$ is a factorization of a prefix of $w$.  In particular, we have that $p^\omega \leq \ell_n^\omega$ for infinitely many prefixes of $p$.  Taking the limit of these prefixes, we find that $w_n \leq \ell_n^\omega$.  Thus, $\ell_1, \dots, \ell_n, w_n$ is a nontrivial factorization of $w$, contradicting Theorem $\ref{L}$.

Therefore either the process terminates and produces a nontrivial finite generalized Lyndon factorization of $w$, or it continues indefinitely and produces a nonincreasing generalized Lyndon factorization of $w$.  Either case contradicts Theorem \ref{L}, so $w$ must have infinitely many generalized Lyndon prefixes.
\end{proof}

We cannot hope this result extends to the case where the infinite word is not primitive. For example, consider $(01)^\omega$ under the alternating order.  It has infinitely many Galois prefixes, namely the prefixes of the form $(01)^k 0$ for any $k \in \N$, but $01$ is not Galois.

\section{Further Directions}

Theorem \ref{S} shows that infinite generalized Lyndon words have infinitely many generalized Lyndon prefixes.  But which finite generalized Lyndon words can arise as Lyndon prefixes?  It is straightforward to see that if the alphabet $A$ is finite, then the maximum $1$-letter word will not arise as a prefix of any infinite generalized Lyndon word. However, we conjecture that every other finite generalized Lyndon word is extendable to an infinite generalized Lyndon word.

\begin{conjecture}
Every finite generalized Lyndon word of length at least $2$ is a prefix of an infinite generalized Lyndon word.
\end{conjecture}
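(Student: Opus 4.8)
The plan is to realize the desired word as a limit of finite generalized Lyndon words and then certify it using the characterization of Section~\ref{6}. Fix a finite generalized Lyndon word $\ell$ with $|\ell| = m \geq 2$. First I would record the easy fact that, being generalized Lyndon of length at least $2$ (hence primitive), $\ell$ cannot begin with the $<_1$-maximal letter, since otherwise the rotation comparison at index $1$ would force $\ell$ to be a constant word. I would then aim to construct an \emph{aperiodic} infinite word $w$ having $\ell$ as a prefix and possessing infinitely many generalized Lyndon prefixes: such a $w$ is primitive, so Lemma~\ref{A} immediately forces it to be infinite generalized Lyndon, finishing the proof. Thus everything reduces to producing generalized Lyndon extensions of $\ell$ of every length, with enough freedom to keep the limit aperiodic.

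For a finite alphabet I would package the limit via K\"onig's lemma: the finite generalized Lyndon words with prefix $\ell$ form a finitely branching tree under the prefix relation, rooted at $\ell$, so if this tree is infinite it has an infinite branch, giving $w \supseteq \ell$ with infinitely many generalized Lyndon prefixes. (For an infinite alphabet K\"onig fails, and one must instead exhibit an explicit coherent chain of one-letter extensions.) Two points then remain: that the tree is infinite, and that some branch is aperiodic. The second is not automatic — if every long extension were confined to a single periodic word $(\ell')^\omega$ with $\ell'$ generalized Lyndon, then $(\ell')^\omega$ is \emph{not} infinite generalized Lyndon, and indeed in that scenario $\ell$ would fail to extend at all; so excluding this periodic trap is part of the real content.

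The structural fact I would build on is that $\ell^\omega$ already satisfies $\ell^\omega < \ell^\omega[j,\infty)$ for every shift $j \geq 2$ with $j \not\equiv 1 \pmod{m}$: non-period shifts resolve strictly inside $\ell$, while for a period-offset $p = j-1$ the generalized Lyndon property of $\ell$ supplies, at the first unresolved index $t = m-p+1$, the strict inequality $\ell[t] <_t \ell[1]$, matching the character $\ell^\omega[j,\infty)$ carries there. The \emph{only} failures are the full-period shifts $j \equiv 1 \pmod{m}$, where $\ell^\omega = \ell^\omega[j,\infty)$. So the task becomes to perturb $\ell^\omega$ far out — replacing later copies of $\ell$ by longer generalized Lyndon blocks agreeing with $\ell$ on long prefixes — so as to make every full-period comparison strict while preserving all comparisons that $\ell$ already wins, and while keeping the limit aperiodic. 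The inequalities $\ell[t] <_t \ell[1]$ are exactly what let one append letters dictated by $\ell$'s own structure (for instance $\ell[1]$ at the junction) without spoiling the period-induced constraints.

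The step I expect to be the genuine obstacle is reconciling these constraints under the \emph{position-dependent} order. A letter appended at the junction must beat $\ell[t]$ in the order $<_t$ for every period-offset simultaneously, and a priori these orders are unrelated; the inequalities $\ell[t] <_t \ell[1]$ single out $\ell[1]$ as a candidate, but whenever a comparison ties it recurses into the tail and forces further letters, driving the construction back toward the very periodic word $\ell^\omega$ we must escape. This is also why the classical recipe of appending a globally maximal letter fails outright: no letter is maximal at every position at once. Engineering a perturbation that is at once aperiodic and compatible with all suffix comparisons — with no finite-alphabet K\"onig shortcut available in general — is where the essential difficulty lies, and is presumably why the statement remains only conjectured.
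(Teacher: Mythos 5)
There is nothing in the paper to compare your proposal against: this statement is posed in the paper's Further Directions section as an open conjecture, with no proof offered. So the only question is whether your proposal settles it, and it does not --- as you yourself concede in your final paragraph. The reduction you set up is sound as far as it goes: by Lemma \ref{A}, it would suffice to produce an aperiodic infinite word with prefix $\ell$ having infinitely many generalized Lyndon prefixes, and your preliminary observations are correct --- $\ell$ cannot begin with a $<_1$-maximal letter; $uv < vu$ for the equal-length words $uv$ and $vu$ does force $(uv)^\omega < (vu)^\omega$, so $\ell^\omega$ is strictly below all of its non-full-period shifts; and the periodic words $(\ell')^\omega$ must genuinely be avoided, since the paper's own example $(01)^\omega$ under the alternating order shows that primitivity in Lemma \ref{A} cannot be dropped. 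All of this is consistent with the paper's discussion surrounding the conjecture (including the failure of the classical ``append a maximal letter'' recipe).

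The gap is that the two facts carrying all the content are exactly the ones left unproved: (i) that $\ell$ admits generalized Lyndon extensions of arbitrary length, i.e., that the prefix-tree rooted at $\ell$ is infinite; and (ii) that some infinite branch can be kept aperiodic rather than collapsing into a single periodic word. K\"onig's lemma (where it applies at all, namely for finite alphabets) merely converts (i) and (ii) into the conclusion; it does nothing toward establishing them. Your third paragraph's perturbation scheme for $\ell^\omega$ is a heuristic rather than an argument: in particular, the claim that a period-offset comparison resolves exactly at index $t = m-p+1$ via $\ell[t] <_t \ell[1]$ is too strong, since a tie there pushes the comparison deeper into the word, and it is precisely this recursion through ties under position-dependent orders --- which you correctly identify as the essential obstacle --- that is never resolved. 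So what you have is a correct reduction plus an accurate map of the difficulty, not a proof; the statement remains open, exactly as the paper leaves it.
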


Observe that every finite Lyndon word can be extended to an infinite Lyndon word by appending an $\omega$-power of the maximum letter.  However, in the generalized case there is no notion of a maximal letter appearing in a word.  Moreover, there exist finite generalized Lyndon words which cannot be extended to infinite generalized Lyndon words by appending a power of a letter.  For example, $01000010$ is a Galois word but $01000010(0^{\omega})$ and $01000010(1^\omega)$ are not Galois.  The Galois word $01000010$ is still extendable to an infinite Galois word by appending a slightly more complicated suffix, e.g., $01^\omega$.  Given an infinite word $w$, it may be interesting to characterize which finite generalized Lyndon words are extendable to an infinite generalized Lyndon word by appending $w$. 

Given that every word has a unique nonincreasing factorization into generalized Lyndon words, one may wish to characterize or compute this factorization. For example, given a simple representation (e.g. a finite expression of products and powers) of an infinite word and a generalized lexicographical ordering, one may wish to compute the factorization of the word in polynomial time.

In a different direction, the existence and uniqueness of a factorization of a general transfinite (ordinally indexed) word into Lyndon words is proved in \cite{BC}. It remains to be seen whether this factorization theorem still holds when using generalized Lyndon words. Lastly, one may seek a general characterization of the first factor in a generalized Lyndon factorization along the lines of \cite{Ufn}. While simple characterizations such as longest Lyndon prefix fail, there may be a more clever characterization lurking in the background.

\section{Acknowledgements}
We extend our thanks to the anonymous referees for their detailed comments.  The paper was greatly improved by their suggestions. 

%
%
%

\end{document}